%Aug 25 2011 JIe Qing
%Mar 13 2011 Jie Qing
%Feb 13 2011 Peng Lu
%%% September 25, 2011 Peng Lu for ArXiv (minor change)
%\documentclass{article}
\documentclass[12pt]{amsart}
\usepackage{amssymb}
\usepackage{amsmath}
%\usepackage{showkeys}

%%%%%%%%%%%%%%%%%%%%%%%%%%%%%%%%%%%%%%%%%%%%%%%%%%%%%%%%%%%%%%%%%%%%%%%%%%%%%%%%%%
%%%%%%%%%%%%%%%%%%%%%%%%%%%%%%%%%%%%%%%%%%%%%%%%%%%%%%%%%%%%%%%%%%%%%%%%%%%%%%%%%%
\textheight 8in
\textwidth 5.5 in
\voffset -0.3in
\hoffset -0.6in

%%%%%%%%%%%%%%%%%%%%%%%%%%%%%%%%%%%%%%%%%%%%%%%%%%%%%%%%%%%%%%%%%%%%%%%%%%%%%%%%%%
%%%%%%%%%%%%%%%%%%%%%%%%%%%%%%%%%%%%%%%%%%%%%%%%%%%%%%%%%%%%%%%%%%%%%%%%%%%%%%%%%%
\setcounter{MaxMatrixCols}{10}

\newtheorem{theorem}{Theorem}[section]

\newtheorem{corollary}[theorem]{Corollary}

\newtheorem{definition}[theorem]{Definition}

\newtheorem{lemma}[theorem]{Lemma}

\newtheorem{proposition}[theorem]{Proposition}
\newtheorem{remark}[theorem]{Remark}

\numberwithin{equation}{section}

\begin{document}

\title{A Note  On Conformal Ricci Flow}

\author{Peng Lu, Jie Qing$^\dag$, and Yu Zheng }

\begin{abstract}
In this note we study conformal Ricci flow introduced by Arthur Fischer in \cite{Fi}. We use DeTurck's trick to rewrite conformal Ricci flow as a strong parabolic-elliptic partial differential equations. Then we prove short time existences for conformal Ricci flow
on compact manifolds as well as on asymptotically flat manifolds. We show that Yamabe constant is monotonically increasing along conformal Ricci flow on compact manifolds. We also show that conformal Ricci flow is the gradient flow for the ADM mass on asymptotically flat manifolds.
\end{abstract}

\keywords{conformal Ricci flow, short time existence, asymptotically flat manifolds, ADM mass}
\renewcommand{\subjclassname}{\textup{2000} Mathematics Subject Classification}
 \subjclass[2000]{Primary 53C25; Secondary 58J05}

\address{Peng Lu, Department of Mathematics, University of Oregon, Eugene, OR 97403, USA} \email{penglu@uoregon.edu}

\address{Jie Qing, Department of Mathematics, University of California, Santa Cruz, CA 95064, USA} \email{qing@ucsc.edu}

\address{Yu Zheng, College of Mathematics, East China Normal University, Shanghai, PR China}
\email{zhyu@math.ecnu.edu.cn}

\thanks{$^\dag$ Research partially supported by NSF DMS-1005295 and CNSF 10728103.}

\maketitle

\section{Introduction}

Suppose that ${\rm M}^m$ is a smooth m-dimensional manifold and that $g_0$ is a Riemmanian metric on ${\rm M}^m$ with a constant scalar curvature $s_0$. The conformal Ricci flow on ${\rm M}^m$ is defined as follows:
\begin{equation}\label{eq crf orig-0}
\left\{\aligned
& \frac{\partial g }{\partial t} + 2 \left( Ric - \frac {s_0}m g\right)
= - 2p g  \text{ in ${\rm M}^m\times (0, T)$} \\
& s[g(t)] = s_0 \text{ in ${\rm M}^m\times [0, T)$}
\endaligned\right.
\end{equation}
for a family of metrics $g(t)$ with initial condition $g(0) =g_0$
and a family of functions $p=p(t)$ on ${\rm M}^m\times [0,T)$, where $s[g(t)]$ is the scalar curvature of the evolving metric $g(t)$.  Conformal Ricci flow \eqref{eq crf orig-0} was introduced by Arthur Fischer in \cite{Fi} as a modified Ricci flow that preserves the constant scalar curvature of the evolving metrics. Because the role of conformal geometry plays in maintaining scalar curvature constant such a modified Ricci flow was named as conformal Ricci flow in \cite{Fi}.  It was shown in \cite{Fi} that on compact manifolds conformal Ricci flow is equivalent to
\begin{equation}\label{eq crf orig}
\left\{\aligned
& \frac{\partial g}{\partial t} + 2 \left( Ric - \frac {s_0}m g\right)
= -2p g \text{ in ${\rm M}^m\times (0, T)$} \\
& (m-1)\Delta p + s_0 p = - |{\rm Ric} - \frac {s_0}m g|^2 \text{ in ${\rm M}^m\times [0, T)$}
\endaligned\right.
\end{equation}
with the initial condition $g(0) = g_0$. Based on the fact that conformal Ricci flow \eqref{eq crf orig} is of  parabolic-elliptic nature analogous to Navier-Stokes equations,  the function $p$ is named as conformal pressure function in \cite{Fi}. Using the theory of dynamical systems on infinite dimensional manifolds it was proved in \cite{Fi} that conformal Ricci flow exists at least for short time on compact manifolds of negative Yamabe type. It was also observed in \cite{Fi} that Yamabe constant monotonically increases along conformal Ricci flow on compact manifolds of negative Yamabe type. Therefore it is hopeful that conformal Ricci flow does a good job in constructing Einstein metrics, considering the behavior of Hilbert-Einstein action on the space of Riemannian metrics.

In this note we adopt DeTurck's trick \cite{DT2}\cite{DT3} to eliminate the degeneracy of \eqref{eq crf orig} from the symmetry of diffeomorphisms and consider the following what we call DeTurck conformal Ricci flow
\begin{equation}\label{dcrf}
\left\{\aligned &
\frac{\partial}{\partial t} g  + 2 \left ( {\rm Ric} -\frac {s_0}m g\right )
= -2 p g + \mathcal {L}_{W} g\\
& (m-1) \Delta p + s_0 p = - | {\rm Ric} -\frac {s_0}m g |^2
\endaligned\right.
\end{equation}
for an appropriately chosen vector field $W$ (cf. \eqref{vector field}) with an initial metric $g(0) = g_0$ of constant scalar curvature $s_0$.  \eqref{dcrf} is a strong parabolic-elliptic partial differential equations. We use the contractive mapping theorem to prove the isomorphism property for linearized DeTurck conformal Ricci flow and we use an implicit function theorem to prove short time existence for DeTurck conformal Ricci flow. Therefore we obtain short time existence for confomral Ricci flow based on the discussion in Section \ref{subsec Dconformal Ricci flow}. For parabolic H\"{o}lder spaces and the theory of linear and nonlinear parabolic equations we take references mostly from \cite{Lu}.

\begin{theorem} Let $({\rm M}^m, \ g_0)$ be a compact Riemannian manifold of constant scalar curvature $s_0$ with no boundary. And suppose that  the elliptic operator $(m-1)\Delta[g_0] + s_0$ is invertible. Then there exists a small positive number $T$ such that conformal Ricci flow $g(t)$ with the initial metric $g_0$ exists for $t\in [0, T]$.
\end{theorem}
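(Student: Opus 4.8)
The plan is to follow the strategy announced above: solve the DeTurck conformal Ricci flow \eqref{dcrf}, which is a genuinely parabolic-elliptic system, and then recover a solution of \eqref{eq crf orig-0} by pulling back along a time-dependent family of diffeomorphisms. First I would reduce \eqref{dcrf} to a single quasilinear parabolic equation for the metric. For a metric $g$ close to $g_0$, the operator $(m-1)\Delta[g]+s_0$ remains invertible by the hypothesis and continuity, so the elliptic equation in \eqref{dcrf} determines $p=p[g]$ uniquely --- this is exactly where the invertibility assumption is used. Substituting $p[g]$ into the evolution equation yields
\begin{equation*}
\frac{\partial g}{\partial t} = -2\left(\mathrm{Ric}[g]-\frac{s_0}{m}g\right) - 2\,p[g]\,g + \mathcal{L}_{W}g,
\end{equation*}
whose principal part, thanks to the DeTurck vector field $W$ (cf. \eqref{vector field}), carries the symbol of the Laplacian and is strictly parabolic; the nonlocal term $p[g]\,g$ is of order zero and does not affect the symbol. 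I would work throughout in the parabolic H\"older spaces $C^{2+\alpha,\,1+\alpha/2}$ of \cite{Lu}.

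Next I would establish the isomorphism property of the linearized flow. Linearizing \eqref{dcrf} at $g_0$ gives a coupled linear system: a strictly parabolic equation for the metric variation $h$ with a zeroth-order coupling to the linearized pressure $q$, together with the linear elliptic equation $\big((m-1)\Delta[g_0]+s_0\big)q = (\text{a second-order expression in }h)$. I would solve this by a fixed-point scheme --- given $q$, solve the parabolic equation for $h$ by standard Schauder theory; given $h$, solve the elliptic equation for $q$ using invertibility of $(m-1)\Delta[g_0]+s_0$ --- and show that on a sufficiently short time interval the composition is a contraction, the relevant lower-order operator norms being small as $T\to0$. The contraction mapping theorem then produces a unique solution and identifies the linearized operator, sending $h$ to the pair (parabolic defect, initial value), as an isomorphism of the chosen Banach spaces.

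With the linearization an isomorphism, I would define the nonlinear map $F(g)=\big(\partial_t g + 2(\mathrm{Ric}[g]-\frac{s_0}{m}g) + 2p[g]\,g - \mathcal{L}_{W}g,\ g(0)-g_0\big)$, whose Fr\'echet derivative at $g_0$ is precisely the operator just shown to be invertible. The implicit (inverse) function theorem in Banach spaces then furnishes a short-time solution $\tilde g(t)$ of \eqref{dcrf} on some interval $[0,T]$.

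Finally I would pass from \eqref{dcrf} back to \eqref{eq crf orig-0}. Letting $\phi_t$ be the flow of $-W$ with $\phi_0=\mathrm{id}$, solved from the associated ODE (shrinking $T$ if necessary), I set $g(t)=\phi_t^*\tilde g(t)$ and $p(t)=p[\tilde g(t)]\circ\phi_t$. The usual DeTurck computation cancels the Lie-derivative term, so $g(t)$ solves the elliptic form \eqref{eq crf orig}; and because both the elliptic equation for $p$ and the scalar curvature are natural under diffeomorphisms, $p(t)$ satisfies the transformed elliptic equation while $s[g(t)]=s[\tilde g(t)]\circ\phi_t$. The equivalence of \eqref{eq crf orig} and \eqref{eq crf orig-0} from \cite{Fi} --- namely that the elliptic equation for $p$ forces $s-s_0$ to obey a homogeneous linear parabolic equation, hence to vanish once $s[g_0]=s_0$ --- then gives $s[g(t)]\equiv s_0$, so $g(t)$ is the desired conformal Ricci flow. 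I expect the main obstacle to be the second step: controlling the parabolic-elliptic coupling in the linearization, in particular verifying that the nonlocal pressure term is genuinely lower order and that the coupling map contracts for small $T$, where the invertibility hypothesis on $(m-1)\Delta[g_0]+s_0$ enters decisively.
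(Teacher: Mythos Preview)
Your strategy is the paper's: pass to the DeTurck gauge, solve the linearized parabolic--elliptic system by a contraction in the pressure variable, invoke an implicit function theorem for the nonlinear problem, and then pull back by the flow of $-W$. Your contraction scheme for the linearized system and the final diffeomorphism step match Proposition~\ref{lem 4} and Lemma~\ref{lem dcrf to sol crf} essentially verbatim.

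There is, however, one genuine gap in your inverse function theorem step. You linearize at the constant-in-time metric $g_0$ and then assert that the inverse function theorem produces a short-time solution. But the inverse function theorem only lets you hit targets close to $F(g_0)$, and with $g(t)\equiv g_0$ the parabolic defect component of $F(g_0)$ equals $-\mathcal{F}(g_0)$ in the notation of \eqref{set-up}, whose $C^{0,\alpha}([0,T]\times{\rm M})$ norm is a fixed positive constant \emph{independent of} $T$. Shrinking $T$ does not bring $(0,0)$ into the neighbourhood of $F(g_0)$ in these sup-type H\"older norms, so the IFT at $g_0$ does not by itself yield a zero of $F$. The paper closes this gap by working not at $g_0$ but at the explicit first-order approximate solution $\bar g(t)=g_0+t\,\mathcal{F}(g_0)$ (see \eqref{x_0}--\eqref{H(x_0)}), for which $\mathcal{M}(\bar g)=O(t)$ and hence $\|\mathcal{M}(\bar g)\|_{C^{0,\alpha}}=O(T)$; Lemma~\ref{implicit} then applies once $T$ is small. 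This is also why the paper assumes $g_0\in C^{4,\alpha}$: one needs $\mathcal{F}(g_0)\in C^{2,\alpha}$ so that $\bar g\in C^{1,2+\alpha}$ and the linearization can be taken at $\bar g$. Your outline needs either this approximate-solution device, or a replacement of the IFT by a direct contraction on the nonlinear map that extracts a factor of $T$ from the parabolic solution operator.
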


This extends the existence result in \cite{Fi} to include compact manifolds of positive Yamabe type. We also extend the monotonicity of Yamabe constant in \cite{Fi} as follows:

\begin{theorem}
Let $({\rm M}^m,g_0)$ be a compact Riemannian manifold with no boundary and let $g(t), \, t \in [0,T)$ be the solution of conformal Ricci flow with $g(0)=g_0$.
Suppose that $g_0$ is the only Yamabe metric in the conformal class $[g_0]$ with $s[g_0]=s_0$ and that $(m-1) \Delta[ g_0] +s_0$ is invertible.
Then there is $T_0 \in (0, T]$ such that each metric $g(t)$, $t\in [0, T_0)$, is a Yamabe metric and the Yamabe constant $Y[g(t)]$ increases strictly for $t \in [0,T_0)$ unless $g_0$ is an Einstein metric.
\end{theorem}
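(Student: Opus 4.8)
The plan is to separate the statement into a soft part, that $g(t)$ stays a Yamabe metric for a short time, and a computational part, that differentiates the Yamabe constant along the flow. The basic structural fact is that conformal Ricci flow keeps the scalar curvature pinned at $s_0$, so $s[g(t)]\equiv s_0$ for every $t$ for which the flow exists. Hence, writing the Yamabe functional as $E[g]=\frac{\int_M s[g]\,dV_g}{\mathrm{Vol}(g)^{(m-2)/m}}$, along the flow it collapses to $E[g(t)]=s_0\,\mathrm{Vol}(g(t))^{2/m}$, because the numerator is just $s_0\,\mathrm{Vol}(g(t))$. Since $g(t)\in[g(t)]$ we always have $Y[g(t)]\le E[g(t)]$, and once $g(t)$ is shown to be a minimizer this becomes an equality $Y[g(t)]=s_0\,\mathrm{Vol}(g(t))^{2/m}$. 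Everything then reduces to (I) the minimizing property and (II) the monotonicity of $s_0\,\mathrm{Vol}(g(t))^{2/m}$.

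For (II) I would first record the volume evolution. Tracing $\partial_t g=-2(\mathrm{Ric}-\frac{s_0}{m}g)-2pg$ with respect to $g$ gives $\mathrm{tr}_g\partial_t g=-2pm$, so $\partial_t\,dV_g=-pm\,dV_g$ and $\frac{d}{dt}\mathrm{Vol}(g(t))=-m\int_M p\,dV_g$. Next I integrate the pressure equation $(m-1)\Delta p+s_0p=-|\mathrm{Ric}-\frac{s_0}{m}g|^2$ over the closed manifold; the Laplacian term integrates to zero, leaving the identity $s_0\int_M p\,dV_g=-\int_M|\mathrm{Ric}-\frac{s_0}{m}g|^2\,dV_g$. (Note that invertibility of $(m-1)\Delta[g_0]+s_0$ forces $s_0\neq0$, since constants always lie in the kernel of $\Delta$.) Feeding this into the chain rule gives
\[
\frac{d}{dt}E[g(t)]=-2\,\mathrm{Vol}(g(t))^{\frac{2-m}{m}}\Big(s_0\!\int_M p\,dV_g\Big)=2\,\mathrm{Vol}(g(t))^{\frac{2-m}{m}}\int_M\Big|\mathrm{Ric}-\tfrac{s_0}{m}g\Big|^2 dV_g \ge 0,
\]
with strict inequality exactly when $g(t)$ is not Einstein.

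The hard part is (I). My plan is a continuity argument anchored by the nondegeneracy hypothesis. Up to a nonzero constant, $(m-1)\Delta[g_0]+s_0$ is the linearization at $\phi=0$ of the conformal scalar curvature map $\phi\mapsto s[e^{2\phi}g_0]$, so its invertibility says precisely that $g_0$ is a nondegenerate constant scalar curvature metric. By the implicit function theorem, every conformal class near $[g_0]$ then contains a unique constant-scalar-curvature-$s_0$ metric close to $g_0$, varying continuously with the class, and this metric is a \emph{strict local} minimizer of $E$. Since $g(t)$ is a constant-scalar-curvature-$s_0$ metric in $[g(t)]$ with $g(t)\to g_0$ and $[g(t)]\to[g_0]$ as $t\to0$, it must be this distinguished nearby metric, hence a strict local minimizer, for $t$ small. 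To upgrade ``local'' to ``global'' I would argue by contradiction: if $Y[g(t_k)]<E[g(t_k)]$ for some $t_k\to0$, choose genuine Yamabe minimizers $h_k\in[g(t_k)]$ normalized to $s[h_k]=s_0$; continuity of the Yamabe constant gives $E[h_k]=Y[g(t_k)]\to Y[g_0]=E[g_0]$, so by compactness of minimizers of bounded energy the $h_k$ subconverge to a Yamabe minimizer of $[g_0]$, which by the uniqueness hypothesis is $g_0$. Then $h_k$ and $g(t_k)$ are two constant-scalar-curvature-$s_0$ metrics in $[g(t_k)]$ both tending to $g_0$, and local uniqueness forces $h_k=g(t_k)$ for large $k$, contradicting $E[h_k]<E[g(t_k)]$. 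This produces $T_0>0$ with $g(t)$ a Yamabe metric on $[0,T_0)$. I expect this compactness of near-minimizers as the conformal class varies to be the genuine obstacle; I would control it using the a priori energy bound $E[h_k]\to Y[g_0]$ together with nondegeneracy to exclude bubbling.

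Combining (I) and (II), on $[0,T_0)$ we have $Y[g(t)]=E[g(t)]$, so $\frac{d}{dt}Y[g(t)]\ge0$, strictly as long as $g(t)$ is not Einstein. If $g_0$ is not Einstein then $\int_M|\mathrm{Ric}-\frac{s_0}{m}g_0|^2>0$, and this persists for small $t$ by continuity, giving strict monotonicity after possibly shrinking $T_0$. If instead $g_0$ is Einstein, then the pressure equation reads $(m-1)\Delta p+s_0p=0$, so $p\equiv0$ by invertibility, the flow is stationary, and $Y[g(t)]$ is constant; this is the exceptional case in the statement.
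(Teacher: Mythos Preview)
Your proof is correct and follows essentially the same route as the paper: the paper splits the statement into (i) strict monotonicity of the Yamabe quotient $Q[g(t)]=s_0\,\mathrm{Vol}(g(t))^{2/m}$, proved via the same volume computation you give, and (ii) the fact that $g(t)$ remains a Yamabe minimizer for small $t$, proved by exactly your contradiction argument---pick Yamabe metrics $\tilde g_i\in[g(t_i)]$, use compactness of Yamabe metrics to pass to a limit in $[g_0]$, invoke the uniqueness hypothesis to identify the limit with $g_0$, and then use invertibility of $(m-1)\Delta[g_0]+s_0$ (i.e.\ local uniqueness of constant-scalar-curvature metrics) to force $\tilde g_i=g(t_i)$. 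You have simply made explicit the implicit-function-theorem mechanism behind the last step and the role of $s_0\neq 0$, which the paper leaves to the reader.
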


This theorem indicates that conformal Ricci flow is somehow better family of constant scalar curvature metrics than those obtained in \cite{Ko}.

On asymptotically flat manifolds we use weighted H\"{o}lder spaces defined in \cite{LP} and define weighted parabolic H\"{o}lder spaces based on the similar ones in \cite{Lu} \cite{OW}.

\begin{theorem}\label{existence AF} Let $({\rm M}^m, \ g_0)$ be scalar flat and 
asymptotically flat manifold with $g_0 - g_e\in C^{4, \alpha}_{-\tau}$ and $\tau \in (0, m-2)$, 
where $g_e$ is the standard Euclidean metric.  Then there exists a small positive number $T$ such that the conformal Ricci flow $g(t)$  from the initial metric $g_0$ exists for $t\in [0, T]$ and $g(t)-g_e \in C^{1, 2+\alpha}_{-\tau}([0, T]\times{\rm M})$.
\end{theorem}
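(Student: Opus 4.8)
The plan is to follow the same DeTurck-plus-fixed-point strategy as in the compact case, but carried out in the weighted parabolic H\"older spaces $C^{1,2+\alpha}_{-\tau}([0,T]\times{\rm M})$ adapted to the asymptotically flat end. Since $g_0$ is scalar flat we have $s_0=0$, so the elliptic constraint in the DeTurck conformal Ricci flow \eqref{dcrf} becomes simply $(m-1)\Delta p = -|{\rm Ric}|^2$, and the evolution equation reads $\partial_t g = -2{\rm Ric} - 2pg + \mathcal{L}_W g$ with $W$ built from the fixed reference metric that equals $g_e$ near infinity. First I would write $g = g_e + h$, record that $h_0 = g_0 - g_e \in C^{4,\alpha}_{-\tau}$, and linearize \eqref{dcrf} about $g_0$; by the usual DeTurck computation the principal part of the evolution is the heat operator $\partial_t - \Delta$ on symmetric $2$-tensors, coupled to the scalar operator $(m-1)\Delta$ acting on $p$.

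The analytic heart of the argument is to show that this linearized parabolic-elliptic system is an isomorphism between the relevant weighted spaces. For the elliptic factor I would invoke the weighted Schauder theory of \cite{LP}: on an asymptotically flat $m$-manifold the Laplacian $\Delta\colon C^{2,\alpha}_{\delta}\to C^{0,\alpha}_{\delta-2}$ is an isomorphism exactly when $\delta$ lies in the non-exceptional interval $(2-m,0)$, and the hypothesis $\tau\in(0,m-2)$ places $\delta=-\tau$ precisely in this range. This is the reason for the restriction on $\tau$, and scalar flatness is what makes the pressure operator the bare Laplacian rather than $(m-1)\Delta+s_0$. For the parabolic factor I would establish the corresponding weighted parabolic Schauder estimate for $\partial_t-\Delta$, solving the linear Cauchy problem with initial data in $C^{4,\alpha}_{-\tau}$ and forcing in the weighted space, and obtaining a solution in $C^{1,2+\alpha}_{-\tau}([0,T]\times{\rm M})$ with bounds uniform for small $T$; this combines the interior parabolic estimates of \cite{Lu} with the decay structure at infinity following \cite{OW}.

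Granting the linearized isomorphism—itself established by a contraction mapping argument as in the compact case—I would reformulate \eqref{dcrf} as a zero-finding problem for a smooth map between these weighted parabolic H\"older spaces and apply the implicit function theorem, obtaining a unique short-time solution with $g(t)-g_e\in C^{1,2+\alpha}_{-\tau}([0,T]\times{\rm M})$. Finally I would recover a solution of the original conformal Ricci flow \eqref{eq crf orig-0} by pulling back along the family of diffeomorphisms integrating $-W$, as in Section \ref{subsec Dconformal Ricci flow}, checking that these diffeomorphisms fix the asymptotically flat structure.

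The step I expect to be the main obstacle is controlling the feedback of the pressure into the parabolic equation within the weighted spaces. Because $|{\rm Ric}|^2$ decays like a fast power of $r^{-1}$ while the Green's function of $\Delta$ decays only like $r^{-(m-2)}$, the pressure $p$ obtained from the elliptic constraint decays no faster than the monopole rate $r^{-(m-2)}$, so one must verify that the term $pg$ and the closely related nonlinearities still land in the target weighted space on which the parabolic solution operator is bounded and contracts. This decay bookkeeping, together with checking that the DeTurck diffeomorphisms preserve the rate $-\tau$ uniformly on $[0,T]$, is where the weighted parabolic estimates must be used most carefully.
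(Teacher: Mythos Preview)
Your proposal is essentially the paper's own argument: it too reduces to the DeTurck conformal Ricci flow, proves the weighted isomorphism for $D\mathcal{M}$ via a contraction mapping (Proposition \ref{weighted lem 4}) using the weighted elliptic isomorphism of Lemma \ref{Laplace invertible weighted} for $\beta=-\tau\in(2-m,0)$, applies the implicit function theorem (Lemma \ref{implicit}) at the approximate solution $\bar g(t)=g_0+t\mathcal{F}(g_0)$, and then pulls back by the flow of $-W$, noting $W\in C^{0,1+\alpha}_{-\tau-1}$ so that \eqref{ode} is solvable on all of ${\rm M}$ for short time. The paper's proof is deliberately brief (``with only changes of notations''), and your identification of the weighted bookkeeping for the pressure term as the step requiring the most care is apt, though the paper does not elaborate on it.
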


It is easily seen that \eqref{eq crf orig-0} and \eqref{eq crf orig} are equivalent on asymptotically flat manifolds because of the uniqueness of bounded solutions to linear parabolic equations on asymptotically flat manifolds. The scalar flat assumption in Theorem \ref{existence AF}  is less stringent than it looks. Thanks to \cite[Lemma 3.3, Corollary 3.1]{SY} we know that one can always conformally deform  an asymptotically flat metric with nonnegative scalar curvature into a scalar flat asymptotically flat metric.

Conformal Ricci flow is the gradient flow for the ADM mass on asymptotically flat manifolds in the following sense.

\begin{theorem}\label{monotone ADM mass}
Let  $g(t)$ be the conformal Ricci flow obtained in Theorem \ref{existence AF} for $\tau\in (\frac {m-2}2, m-2)$.  Then
\[
\frac{d}{dt} m(g(t)) = -2 \int_M |Ric[g(t)] |^2 d\text{vol}[g(t)].
\]
In particular, the ADM mass $m(g(t))$ is strictly decreasing under conformal Ricci flow except that $g_0$ is
the Euclidean metric.
\end{theorem}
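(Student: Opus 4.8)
The plan is to differentiate the mass directly and convert the resulting flux integral at infinity into a bulk integral by exploiting the fact that scalar curvature is frozen along the flow. Since $({\rm M}, g_0)$ is scalar flat we have $s_0 = 0$ throughout, so conformal Ricci flow reads $\partial_t g = h$ with $h := -2\,{\rm Ric} - 2pg$, while the pressure satisfies $(m-1)\Delta p = -|{\rm Ric}|^2$. I begin from the first variation formula for the mass, which in asymptotic coordinates reads
\[
\frac{d}{dt} m(g(t)) = \lim_{R\to\infty}\int_{S_R}\big(\partial_i h_{ij} - \partial_j h_{ii}\big)\,\nu^j\,dS,
\]
with $S_R$ a large coordinate sphere and $\nu$ its Euclidean outward normal (the normalization being the one under which Theorem \ref{monotone ADM mass} is stated). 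On the asymptotic region one replaces partial derivatives and Euclidean traces/normals by their $g$-counterparts, up to error terms controlled below, so that the right-hand side equals $\lim_{R\to\infty}\int_{S_R}(\mathrm{div}_g h - d\,\mathrm{tr}_g h)(\nu)\,dS$.

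The key algebraic input is the linearization of scalar curvature,
\[
Ds[g](h) = \mathrm{div}_g\big(\mathrm{div}_g h - d\,\mathrm{tr}_g h\big) - \langle {\rm Ric}, h\rangle .
\]
Because $s[g(t)] \equiv s_0$ is constant, $Ds[g](h) = \tfrac{d}{dt} s[g(t)] = 0$, so $\mathrm{div}_g(\mathrm{div}_g h - d\,\mathrm{tr}_g h) = \langle {\rm Ric}, h\rangle$ pointwise. Applying the divergence theorem on $\{r \le R\}$ and letting $R\to\infty$ yields
\[
\frac{d}{dt} m(g(t)) = \int_M \langle {\rm Ric}, h\rangle\, d\mathrm{vol}_g ,
\]
the right-hand side converging since $|{\rm Ric}|^2$ is integrable under the assumed decay. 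Finally I substitute the flow equation and use $s_0 = 0$:
\[
\langle {\rm Ric}, h\rangle = \langle {\rm Ric}, -2\,{\rm Ric} - 2pg\rangle = -2|{\rm Ric}|^2 - 2p\,s_0 = -2|{\rm Ric}|^2 ,
\]
because $\langle {\rm Ric}, g\rangle = s_0 = 0$. Thus the conformal pressure $p$ drops out entirely and the stated identity follows. Strict monotonicity is then immediate: $\tfrac{d}{dt} m \le 0$ with equality forcing ${\rm Ric}[g(t)] \equiv 0$, and a Ricci-flat asymptotically flat metric of this decay is the Euclidean metric, so equality throughout forces $g_0 = g_e$.

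The step I expect to be the main obstacle is the analytic one just glossed over: justifying the divergence theorem on the noncompact ${\rm M}$ and identifying the geometric flux $\int_{S_R}(\mathrm{div}_g h - d\,\mathrm{tr}_g h)(\nu)\,dS$ with the coordinate expression defining the mass variation. The discrepancies between the $g$-geometric and the Euclidean integrands are quadratic in $g - g_e$ and its first derivatives, hence of order $O(r^{-2\tau - 1})$, so over $S_R$ they scale like $R^{\,m-2-2\tau}$, which tends to $0$ exactly when $\tau > \tfrac{m-2}{2}$. This is precisely why Theorem \ref{monotone ADM mass} restricts to $\tau \in (\tfrac{m-2}{2}, m-2)$; carrying out these decay estimates in the weighted H\"older spaces furnished by Theorem \ref{existence AF}, together with the integrability of $\langle {\rm Ric}, h\rangle$, is the technical heart of the argument.
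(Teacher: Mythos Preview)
Your proof is correct and follows essentially the same route as the paper. The paper packages the divergence-theorem computation you spell out as a citation to the Lee--Parker variation formula (\cite[(8.11), Lemma 9.4]{LP}), namely $\frac{d}{dt}(-\int_M s\,d\mathrm{vol} + \omega_{m-1}m(g)) = \int_M G\cdot \partial_t g\,d\mathrm{vol}$ with $G$ the Einstein tensor, and then substitutes $s\equiv 0$ and $\partial_t g = -2\,{\rm Ric}-2pg$; your argument is a direct re-derivation of that lemma via $Ds[g](h)=\mathrm{div}_g(\mathrm{div}_g h - d\,\mathrm{tr}_g h)-\langle {\rm Ric},h\rangle$ and integration by parts. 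For the rigidity step the paper invokes \cite[Theorem~1.5]{BKN} and \cite[Proposition~10.2]{LP} to conclude that a Ricci-flat asymptotically flat manifold is Euclidean, which is the point you assert without citation.
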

As a quick application of Theorem \ref{monotone ADM mass} one can easily show the rigidity part of the celebrated positive mass theorem of Schoen and Yau \cite{SY}. The monotonicity of the ADM mass along conformal Ricci flow is sharply in contrast to the invariance of the ADM mass \cite{DM} \cite{OW} along Ricci flow on asymptotically flat manifolds.

The organization of the note is as follows: in Section 2 we introduce conformal Ricci flow and
establish the monotonicity of Yamabe constant on compact manifolds. In Section 3 we prove short existences for conformal Ricci flow both on compact manifolds and on asymptotically flat manifolds. In Section 4 we recall the definition of the ADM mass and show that conformal Ricci flow on asymptotically flat manifolds is the gradient flow for the ADM mass.

\vskip 0.1in\noindent{\bf Acknowledgment}
The authors would like to mention that Lars Andersson and Dan Lee have communicated to us that they are also aware of some version of Theorem \ref{monotone ADM mass}. The authors are gateful that Dan Lee brought  a related paper \cite{RH} to their attention. Part of this work was done while the authors were visiting Beijing International Center for Mathematical Research and East China Normal University. The authors are grateful to them for the hospitality.

%%%%%%%%%%%%%%%%%%%%%%%%%%%%%%%%%%%%%%%%%%%%%%%%%%%%%%%%%%%%%%%%%
%%%%%%%%%%%%%%%%%%%%%%%%%%%%%%%%%%%%%%%%%%%%%%%%%%%%%%%%%%%%%%%%%

\section{Conformal Ricci Flow}

In this section we first introduce  conformal Ricci flow and calculate evolution equations for curvatures along conformal Ricci flow. We then discuss the monotonicity of Yamabe quotients and Yamabe constants along conformal Ricci flow.

%%%%%%%%%%%%%%%%%%%%%%%%%%%%%%%%%%%%%%%%%%%%%%%%%%%%%%%%%%%%%%%%%
\subsection{Conformal Ricci Flow}\label{intr crf}

Suppose that ${\rm M}^m$ is a smooth m-dimensional manifold and that $g_0$ is a Riemmanian metric on ${\rm M}^m$ with constant scalar curvature $s_0$. In \cite{Fi},  the conformal Ricci flow on ${\rm M}^m$ is defined by \eqref{eq crf orig-0} for a family of metrics $g(t)$ with initial condition $g(0) =g_0$
and  a family of functions $p=p(t)$ on ${\rm M}^m\times [0,T)$.

As calculated in \cite{Fi}, the normalization condition $s[g(t)] = s_0$ may be replaced by an elliptic equation and rewrite \eqref{eq crf orig-0} as \eqref{eq crf orig}. The equivalence between \eqref{eq crf orig-0} and \eqref{eq crf orig} was proved in \cite{Fi} in cases when ${\rm M}^m$ is a compact manifold without boundary (cf. \cite[Proposition 3.2, Proposition 3.4]{Fi}).  Based on the evolution equation for scalar curvature, it is easily seen that \eqref{eq crf orig-0} always implies \eqref{eq crf orig} and \eqref{eq crf orig} implies \eqref{eq crf orig-0} when the solution to the linear heat equations is unique,  which is true in both cases we consider in this note: compact and asymptotically flat.

One important issue for geometric PDE is the scaling property. It is easily seen that
\begin{equation}
g_\lambda (\cdot, t))  = \lambda^{-2}g(\cdot, \lambda^2 t)) \quad \text{and} \quad p_\lambda(\cdot, t) = \lambda^2p(\cdot, \lambda^2t)
\end{equation}
solve conformal Ricci flow \eqref{eq crf orig} if $(g, p)$ solve conformal Ricci flow \eqref{eq crf orig}.

%%%%%%%%%%%%%%%%%%%%%%%%%%%%%%%%%%%%%%%%%%%%%%%%%%%%%%%%%%%%%%%%%%%%

\subsection{Curvature Evolution Equations under conformal Ricci flow} \label{subsec cur evol conformal Ricci flow}

To understand conformal Ricci flow one often needs to calculate how curvatures behave along conformal Ricci flow. The calculations are straightforward. Suppose that we consider a general geometric flow
\begin{equation}\label{general flow}
\frac \partial{\partial t} g = -2T.
\end{equation}
Then we recall the evolution equations for curvarures (cf. \cite{CLN} \cite{Be}, for example)
$$
\aligned
\frac \partial{\partial t} s  & = 2\Delta \Theta - 2\nabla^i\nabla^j T_{ij} + 2 R^{ij}T_{ij}\\
\frac \partial{\partial t} R_{ij} & = \Delta T_{ij} - \nabla_i\nabla^k T_{kj} - \nabla_j\nabla^k T_{ki} + \nabla_i\nabla_j \Theta + 2 R_{ikjl}T^{kl} - R_{ik}T^k_{\ \, j} - R_{jk}T^k_{\ \,  i}\\
\frac \partial{\partial t} R_{ikjl} & = \nabla_i\nabla_j T_{kl} - \nabla_i\nabla_lT_{kj} - \nabla_k\nabla_jT_{il}  + \nabla_k\nabla_lT_{ij} - R_{ikjm}T^m_{\ \ l} - R_{ikml}T^m_{\ \ j}
\endaligned
$$
where $\Theta := g^{ij}T_{ij}$. Particularly for conformal Ricci flow, where
$$
T = {\rm Ric} - \frac {s_0}m g + pg \ \text{and} \ \Theta = s - s_0 + mp,
$$
we may calculate the evolution equations for curvatures under conformal Ricci flow as follows:
$$
\aligned
\frac \partial{\partial t} s  & = \Delta s  + 2\frac {s_0}m (s - s_0) + 2p (s -s_0) +2(m-1)\Delta p + 2s_0p + 2|{\rm Ric} - \frac {s_0}m g|^2\\
\frac \partial{\partial t} R_{ij} & = \Delta R_{ij}  + 2 R_{ikjl}R^{kl} - 2R_{ik}R^k_{\ \, j} + (m-2) \nabla_i\nabla_j p + \Delta p g_{ij}\\
\frac \partial{\partial t}{\rm Rm} & = \Delta {\rm Rm} + {\rm Rm}\ast{\rm Rm} + {\rm Ric}\ast{\rm Rm} + 2\frac {s_0}m{\rm Rm} - 2p{\rm Rm} + T(\nabla^2 p)
\endaligned
$$
where operations $\ast$ stands for contractions of tensors and
$$
T(\nabla^2 p)_{ikjl} = g_{kl}\nabla_i\nabla_j p - g_{kj}\nabla_i\nabla_l p- g_{il}\nabla_k\nabla_j p + g_{ij}\nabla_k\nabla_l p.
$$

%%%%%%%%%%%%%%%%%%%%%%%%%%%%%%%%%%%%%%%%%%%%%%%%%%%%%%%%%%%%%%

\subsection{Yamabe Constants under Conformal Ricci Flow} \label{sect Yamabe constant under crf}

On compact manifolds with no boundary, along conformal Ricci flow, we may calculate
$$
\Theta = mp
$$
$$
\frac{\partial}{\partial t} d\text{vol}[g(t)] = - mpd\text{vol}[g(t)]
$$
\begin{equation}\label{eq volume}
\frac d{d t} \text{vol} ({\rm M}) = - m \int_{\rm M} p d\text{vol}[g] = \frac m {s_0}\int_{\rm M} |{\rm Ric} - \frac {s_0}m g|^2 d\text{vol}[g]
\end{equation}

Given a compact Riemannian manifold $({\rm M}^m,h)$ with no boundary, the Yamabe quotient is defined as:
$$
Q[h] := \frac {\int_{\rm M} s[h] d\text{vol}[h]}{\text{vol}[h]({\rm M})^{\frac{m-2}{m}}}.
$$
and the Yamabe constant is defined as:
$$
Y[h] = \inf_{h\in [h]}Q[h].
$$
A Riemannian metric $h$ is said to be a Yamabe metric if and only if
$$
Q[h] = Y[h].
$$
Thus from \eqref{eq volume} {we have

\begin{proposition} \label{Q CFT} Suppose that $g(t)$, $t\in [0, T)$, is a solution to conformal Ricci flow \eqref{eq crf orig} on a compact manifold with no boundary with $s[g_0]\neq 0$. Then
$Q[g(t)]$ increases strictly unless $g_0$ is an Einstein metric.
\end{proposition}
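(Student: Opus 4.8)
The plan is to exploit the defining feature of conformal Ricci flow --- that the scalar curvature stays constant equal to $s_0$ --- to reduce the Yamabe quotient to a function of the total volume alone, and then to differentiate using the volume evolution \eqref{eq volume}. Since $s[g(t)]\equiv s_0$ along \eqref{eq crf orig}, the numerator of $Q[g(t)]$ collapses to $\int_{\rm M} s_0\, d\text{vol}[g(t)] = s_0\,\text{vol}[g(t)]({\rm M})$, so that, writing $V(t):=\text{vol}[g(t)]({\rm M})$, we obtain the clean formula
$$
Q[g(t)] = \frac{s_0\, V(t)}{V(t)^{(m-2)/m}} = s_0\, V(t)^{2/m}.
$$

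First I would differentiate this expression, getting $\frac{d}{dt}Q[g(t)] = \frac{2s_0}{m}\, V(t)^{2/m-1}\, V'(t)$, and then substitute the volume evolution \eqref{eq volume}, namely $V'(t) = \frac{m}{s_0}\int_{\rm M}|{\rm Ric}-\frac{s_0}{m}g|^2\, d\text{vol}[g]$. The one place any care is needed is that the factor $s_0$ cancels identically, leaving
$$
\frac{d}{dt}Q[g(t)] = 2\, V(t)^{2/m-1}\int_{\rm M}\Big|{\rm Ric}-\frac{s_0}{m}g\Big|^2\, d\text{vol}[g].
$$
Because $V(t)>0$ and the integrand is nonnegative, this derivative is nonnegative \emph{regardless of the sign of} $s_0$; this is why the hypothesis $s_0\neq 0$ --- needed only to make sense of the division by $s_0$ in \eqref{eq volume} --- is all that is required, and no positivity assumption on the Yamabe type enters.

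Finally I would read off the strictness. The derivative vanishes at a time $t$ precisely when $\int_{\rm M}|{\rm Ric}-\frac{s_0}{m}g|^2\,d\text{vol}=0$, that is, exactly when $g(t)$ is Einstein with ${\rm Ric}=\frac{s_0}{m}g$. If $g_0$ is not Einstein, then $\frac{d}{dt}Q[g(t)]>0$ at $t=0$, so $Q$ increases on an initial interval, and combined with the global monotonicity just established this forces $Q[g(t)]>Q[g_0]$ for all $t>0$. I do not expect a genuine obstacle here: the entire argument is an elementary computation once the constancy of $s$ is invoked, and the only subtleties are the bookkeeping behind the $s_0$ cancellation and the identification of the equality case with the Einstein condition.
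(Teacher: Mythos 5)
Your proposal is correct and follows essentially the same route as the paper, which derives Proposition \ref{Q CFT} directly from the volume evolution formula \eqref{eq volume} together with the constancy of the scalar curvature, so that $Q[g(t)]=s_0\,\mathrm{vol}[g(t)]({\rm M})^{2/m}$ and the $s_0$ factors cancel upon differentiation. Your write-up merely makes explicit the computation the paper leaves implicit, including the cancellation of $s_0$ and the identification of the equality case with the Einstein condition.
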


\vskip .1cm
Next we consider the change of Yamabe constants along conformal Ricci flow. As observed in  \cite{WZ}  \cite{ChL} \cite{And} \cite{Ko} Yamabe constant could behave rather irregular in general among the metrics of positive Yamabe type.

\begin{theorem}\label{thm Yamabe constant monotone}
Let $({\rm M}^m, \ g_0)$ be a compact Riemannian manifold with no boundary and let $g(t), \, t \in [0,T)$ be the solution of conformal Ricci flow with $g(0)=g_0$.
Suppose that $g_0$ is the only Yamabe metric in the conformal class $[g_0]$ with $s[g_0]= s_0$ and that $(m-1) \Delta[g_0] + s_0$ is invertible.
Then there is $T_0 \in (0, T]$ such that each metric $g(t)$, $t\in [0, T_0)$, is a Yamabe metric and the Yamabe constant $Y[g(t)]$ increases strictly for $t \in [0,T_0)$ unless $g_0$ is an Einstein metric.
\end{theorem}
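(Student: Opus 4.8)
The plan is to split the statement into a variational claim and a monotonicity claim: first show that $g(t)$ is a Yamabe metric in its own conformal class $[g(t)]$ for all small $t$, after which monotonicity is almost immediate. Indeed, $s[g(t)]\equiv s_0$ is preserved by the flow, so $Q[g(t)]=s_0\,\mathrm{vol}[g(t)]^{2/m}$; once we know $g(t)$ is a Yamabe metric we have $Y[g(t)]=Q[g(t)]$, and Proposition \ref{Q CFT} (which applies since $s_0\neq 0$: were $s_0=0$, the constants would lie in the kernel of $(m-1)\Delta[g_0]$, contradicting invertibility) gives that $Q[g(t)]$, hence $Y[g(t)]$, increases strictly unless $g_0$ is Einstein. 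So everything reduces to the first claim.

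To attack that claim I would first record the link between the hypothesis and the variational geometry at $g_0$. Writing a conformal metric as $u^{4/(m-2)}g$ and computing the second variation of $Q$ at a constant scalar curvature metric of scalar curvature $s_0$, restricted to the conformal class (i.e. over $\phi$ with $\int\phi\,d\mathrm{vol}=0$), one finds that up to a positive factor it equals
\[
-\tfrac{4}{m-2}\int_{\rm M}\phi\,\big((m-1)\Delta\phi+s_0\phi\big)\,d\mathrm{vol}.
\]
Thus invertibility of $(m-1)\Delta[g_0]+s_0$ is exactly nondegeneracy of $g_0$ as a critical point, and combined with the fact that $g_0$ is a minimizer it yields the strict spectral gap $s_0<(m-1)\lambda_1[g_0]$, where $\lambda_1$ denotes the first nonzero eigenvalue of $-\Delta$. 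Since the flow is continuous, $g(t)\to g_0$ and hence $\lambda_1[g(t)]\to\lambda_1[g_0]$ while $s[g(t)]\equiv s_0$, so the gap persists and $g(t)$ is a strict local minimizer of $Q$ in $[g(t)]$ for $t$ small.

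It remains to promote ``local minimizer'' to ``global minimizer,'' which I expect to be the main obstacle. I would argue by contradiction: if there were $t_k\downarrow 0$ with $g(t_k)$ not a Yamabe metric, take genuine Yamabe minimizers $m_k\in[g(t_k)]$, normalized to $s[m_k]=s_0$, so that $Q[m_k]=Y[g(t_k)]<Q[g(t_k)]$. By continuity of the Yamabe constant under the smooth variation of the conformal class, $Y[g(t_k)]\to Y[g_0]=Q[g_0]$, so the $m_k$ are minimizers whose energies approach $Y[g_0]$. The crucial point is compactness: invertibility rules out $g_0$ being conformal to the round sphere (on which $(m-1)\Delta+s_0$ has the first spherical harmonics in its kernel), so Aubin's strict inequality $Y[g_0]<Y(S^m)$ holds, no bubbling occurs, and a subsequence of the $m_k$ converges to a Yamabe minimizer of $[g_0]$; by the uniqueness hypothesis the limit is $g_0$. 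But the implicit function theorem applied to the constant-scalar-curvature equation near $u\equiv 1$, whose linearization is the invertible operator displayed above, shows that $g(t_k)$ is the unique metric of scalar curvature $s_0$ in $[g(t_k)]$ near $g_0$; since the normalized $m_k$ also converge to $g_0$ this forces $m_k=g(t_k)$ for large $k$, contradicting $Q[m_k]<Q[g(t_k)]$. This produces $T_0$ on which $g(t)$ is a Yamabe metric, and the first paragraph then delivers the strict monotonicity of $Y[g(t)]$ unless $g_0$ is Einstein.
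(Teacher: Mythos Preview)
Your proposal is correct and follows essentially the same line as the paper: argue by contradiction, pick Yamabe minimizers $m_k\in[g(t_k)]$, use compactness of Yamabe metrics to pass to a limit in $[g_0]$, invoke the uniqueness hypothesis to identify the limit with $g_0$, and then obtain a contradiction from the invertibility of $(m-1)\Delta[g_0]+s_0$ via local uniqueness of constant scalar curvature metrics near $g_0$. The paper's proof is considerably terser---it simply cites ``compactness of Yamabe metrics'' and states that the contradiction follows from invertibility---whereas you spell out why the round sphere is excluded (so Aubin's strict inequality holds and no bubbling occurs) and make the implicit function theorem step explicit; your second paragraph on the second variation and the spectral gap is correct but not actually needed for the contradiction argument you run in the third paragraph.
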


\begin{proof} We use the proof by contradiction. Assume otherwise there is a sequence
$t_i \rightarrow 0^+$ such that $g(t_i)$ are not Yamabe metrics.
Let $\tilde{g}_i$ be a Yamabe metric in the conformal class $[g(t_i)]$.
Then by the compactness of Yamabe metrics  $\tilde{g}_i$
converges to a Yamabe metric $g_\infty \in [g_0]$ (taking subsequence if necessary).
By the assumption that $g_0$ is the only Yamabe metric in $[g_0]$ we have $g_\infty =g_0$. That is to say that $g(t_i)$ and $\tilde g(t_i)$ both converge to $g_0$, which is a contradiction since $(m-1) \Delta[ g_0] + s_0$ is assumed to be invertible. The similar arguments have been used in \cite{WZ} \cite{ChL} \cite{And} \cite{Ko}.
\end{proof}

%%%%%%%%%%%%%%%%%%%%%%%%%%%%%%%%%%%%%%%%%%%%%%%%%%%%%%%%%%%%%%%%%%%%
%%%%%%%%%%%%%%%%%%%%%%%%%%%%%%%%%%%%%%%%%%%%%%%%%%%%%%%%%%%%%%%%%%%%%%%%%%%%%%%%%%

\section{Short Time Existences of conformal Ricci flow}

In this section we prove the short time existence of conformal Ricci flow. The first step is to combine the two equations in conformal Ricci flow into one evolution eqution with one non-local term. Hence \eqref{eq crf orig} turns into
\begin{equation}\label{eq crf-1}
\frac\partial{\partial t}g  + 2({\rm Ric} - \frac{s_0}m g) = -2\mathcal {P}(g)g \text{ on ${\rm M}$},
\end{equation}
where
$$
\mathcal {P}(g) = ((m-1)\Delta +s_0)^{-1}|{\rm Ric} -\frac {s_0}mg|^2,
$$
provided that $(m-1)\Delta[g(t)] + s_0$ is invertible for all $t\in [0, T]$. The strategy to prove the short time existence for conformal Ricci flows is similar to the one used in \cite{DT2} \cite{DT3} to prove the short time existence for Ricci flow. We will first prove the short time existence for DeTurck conformal Ricci flow written in one equation:
\begin{equation}\label{eq dtcrf-1}
\frac\partial{\partial t}g  + 2({\rm Ric} - \frac{s_0}m g) = -2\mathcal {P}(g)g + \mathcal {L}_{W}g \text{ on ${\rm M}$}.
\end{equation}
To prove the short time existence for \eqref{eq dtcrf-1} we calculate the linearization of DeTurck conformal Ricci flow and apply an implicit function theorem.

%%%%%%%%%%%%%%%%%%%%%%%%%%%%%%%%%%%%%%%%%%%%%%%%%%%%%%%%%%%%%%%%%%%%%%%%%%%

\subsection{DeTurck's Trick} \label{subsec Dconformal Ricci flow}

The conformal Ricci flow as a system of differential equations is of parabolic-elliptic nature similar to Navier-Stokes equations. The significant difference between conformal Ricci flow and Navier-Stokes equations is that conformal Ricci flow is a geometric flow. Hence we need to find ways to eliminate the degeneracy of conformal Ricci flow arising from the symmetry of diffeomorphisms.

In this subsection we will follow the idea from the improved version \cite{DT3} of the approach to solve the short time existence for the Ricci flow in \cite{DT2} to rid off the degeneracy of diffeomorphisms for conformal Ricci flow. To introduce DeTurck's trick we first recall the following operator $G$. Let $g$ be a Riemannian metric on ${\rm M}^m$. The operator $G$ on symmetric 2-tensor $B$ is defined as:
\begin{equation}\label{op g}
G(B) :=  B-\frac{1}{2} \text{Tr}_g (B) g.
\end{equation}
Also recall that the divergence operator $\delta$
$$
(\delta B)_i := \nabla^j B_{ij}: \Gamma(S^2({\rm M}^m))\to \Gamma (T^*{\rm M}^m)
$$
and its adjoint operator $\delta^*$
$$
(\delta^* \omega)_{ij} := - \frac{1}{2} (\omega_{i,j} + \omega_{j, i}): \Gamma(T^*{\rm M}^m) \to
\Gamma(S^2({\rm M}^m)).
$$
Note that, if $X$ is the dual vector field of $\omega$, then
$\delta^* \omega = - \mathcal {L}_{X} g$ where $\mathcal {L}_X$ denotes the Lie derivative in $X$ direction.

According to DeTurck's improved version \cite{DT3} of his approach to the short time existence
of Ricci flow \cite{DT2} we consider the following gauge-fixed conformal Ricci flow on ${\rm M}^m$
\begin{equation}\label{eq dtcrf-0}
\left\{\aligned
& \frac{\partial}{\partial t} g + 2 \left ({\rm Ric} - \frac {s_0}m
g \right ) = - 2 p g + 2(\delta^*( \tilde g^{-1} \delta G(\tilde g)))  \\
& (m-1) \Delta p + s_0 p = - | {\rm Ric} -\frac {s_0}m g |^2
\endaligned\right.
\end{equation}
for a family of metrics $g(t)$ with $g(0)=g_0$ and a family of functions $p(t)$ on ${\rm M}^m\times[0,T)$, where $\tilde g$ is any fixed metric on ${\rm M}^m$.

Suppose that $g(t)$, $t\in [0, T)$, solves \eqref{eq dtcrf-0}. Then we consider the time dependent vector field $W$
\begin{equation}\label{vector field}
W^k := g^{ij}  \left ( \Gamma_{ij}^k[g] - \Gamma_{ij}^k[\tilde{g}]
\right ).
\end{equation}
It turns out that  (cf. \cite[\S 6]{Ha2} \cite {Shi})
$$
2 (\delta^*(\tilde g^{-1} \delta G(\tilde g)))= \mathcal {L}_{W} g.
$$
Hence we can rewrite \eqref{eq dtcrf-0} as follow:
\begin{equation}\label{eq dtcrf}
\left\{\aligned &
\frac{\partial}{\partial t} g  + 2 \left ( {\rm Ric} -\frac {s_0}m g\right )
= -2 p g + \mathcal {L}_{W} g\\
& (m-1) \Delta p + s_0 p = - | {\rm Ric} -\frac {s_0}m g |^2 .
\endaligned\right.
\end{equation}

Conformal Ricci flow \eqref{eq crf orig} and DeTurck conformal Ricci flow \eqref{eq dtcrf} are related to each other by coordinate changes in the following sense. Suppose that $\hat g(t)$ solves DeTurck conformal Ricci flow equations \eqref{eq dtcrf} and $W$ is given as in \eqref{vector field}. We consider the one-parameter family of diffeomofphisms $\varphi_t$ generated by $W$ on ${\rm M}^m$ as:
\begin{equation}\label{ode}
\frac{\partial }{\partial t} \varphi_t(x) = - {W}(\varphi_t(x),t), \qquad
\qquad \varphi_0(x) =x
\end{equation}
for some time period $[0, T)$.
\begin{lemma}\label{lem dcrf to sol crf}
Let $(\hat{g}(t), \hat{p}(t)), \, t \in [0,T)$, be a solution to DeTurck conformal Ricci flow \eqref{eq dtcrf} on manifold $M^m$ with the initial metric $g_0$. Assume that the solution $\varphi_t(x)$ to \eqref{ode} exists for $t \in [0,T)$. Let
$$
g(t) := \varphi^*_t \hat{g}(t) \text{ and } p(t) := \hat{p}( \varphi_t(x),t).
$$
Then $(g(t),p(t)),  \, t \in [0,T)$, is a solution to conformal Ricci flow \eqref{eq crf orig}
on manifold ${\rm M}^m$ with $g(0) =g_0$.
\end{lemma}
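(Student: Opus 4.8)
The plan is to verify directly that the pulled-back pair $(g(t), p(t))$ satisfies the two equations of conformal Ricci flow \eqref{eq crf orig}, exploiting the naturality of all intrinsic geometric quantities under diffeomorphism together with the standard formula for the time derivative of a pullback. The whole point of the minus sign in the defining ODE \eqref{ode} is to make the Lie-derivative term in \eqref{eq dtcrf} cancel.

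First I would compute $\frac{\partial}{\partial t} g(t)$. Since by \eqref{ode} the family $\varphi_t$ is the flow of the time-dependent vector field $-W(\cdot,t)$, the standard identity for differentiating a time-dependent tensor under pullback gives
$$
\frac{\partial}{\partial t}\big(\varphi_t^* \hat g(t)\big) = \varphi_t^*\Big(\frac{\partial \hat g}{\partial t} - \mathcal {L}_{W} \hat g\Big).
$$
Substituting the first equation of \eqref{eq dtcrf} into the right-hand side cancels the $\mathcal {L}_{W}\hat g$ term exactly, leaving
$$
\frac{\partial}{\partial t} g(t) = \varphi_t^*\Big(-2\big({\rm Ric}[\hat g] - \tfrac{s_0}m \hat g\big) - 2\hat p\, \hat g\Big).
$$

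Next I would invoke diffeomorphism invariance. Since $\varphi_t$ is a diffeomorphism one has $\varphi_t^*{\rm Ric}[\hat g] = {\rm Ric}[\varphi_t^*\hat g] = {\rm Ric}[g]$ and $\varphi_t^*\hat g = g$, while $\varphi_t^*(\hat p\, \hat g) = (\hat p \circ \varphi_t)\, g = p\, g$ by the very definition $p(t) = \hat p(\varphi_t(\cdot), t)$. This reduces the displayed equation to $\frac{\partial}{\partial t} g + 2({\rm Ric}[g] - \frac{s_0}m g) = -2 p g$, the first equation of \eqref{eq crf orig}; the initial condition follows from $\varphi_0 = \mathrm{id}$, giving $g(0) = g_0$. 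To finish, I would pull back the elliptic constraint: because the Laplacian, the scalar curvature, the trace-free Ricci tensor and the pointwise norm are all natural under $\varphi_t$, applying $\varphi_t^*$ to $(m-1)\Delta[\hat g]\hat p + s_0 \hat p = -|{\rm Ric}[\hat g] - \frac{s_0}m \hat g|^2$ and using $p = \hat p \circ \varphi_t$, $g = \varphi_t^*\hat g$ yields the second equation of \eqref{eq crf orig} verbatim.

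The only real subtlety, and the step I would treat most carefully, is the sign bookkeeping in the time-derivative-of-pullback formula: one must confirm that the generator of $\varphi_t$ is $-W$ rather than $+W$, so that the induced Lie-derivative term enters with the sign opposite to the $+\mathcal {L}_{W}\hat g$ appearing in \eqref{eq dtcrf} and produces the cancellation. Everything else is the naturality of the intrinsic operators ${\rm Ric}$, $s$, $\Delta$ and $|\cdot|$ under diffeomorphisms, which is immediate and requires no computation in local coordinates.
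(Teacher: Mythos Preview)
Your proposal is correct and follows essentially the same approach as the paper: compute $\partial_t(\varphi_t^*\hat g)$ via the standard time-derivative-of-pullback formula, use naturality of Ricci curvature to identify the remaining terms, and observe that the Lie-derivative contribution cancels because $\varphi_t$ is generated by $-W$. The one minor difference is in the second equation: the paper dispatches it with a one-line remark invoking that the scalar curvature is kept constant, whereas you argue directly by pulling back the elliptic constraint using the naturality of $\Delta$, ${\rm Ric}$ and $|\cdot|$---your route is arguably the more transparent of the two.
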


\begin{proof}
We simply compute by using \eqref{eq dtcrf} (cf.  \cite[\S 2.6]{CLN}, for example)
\begin{align*}
\frac{\partial}{\partial t} g(t) &= \varphi_t^* \left (\frac{\partial}{\partial t}
\hat{g}(t) \right)   + \left . \frac{\partial}{\partial s} \right \vert_{s=0} \left
 ( \varphi_{t+s}^* \hat{g}(t) \right)  \\
&= -2  \varphi_t^* \left ( {\rm Ric}[\hat{g}] - \frac {s_0}m {\hat{g}}
+ \hat{p} {\hat{g}}  \right) + \varphi_t^* \left (\mathcal {L}_{{W}} \hat{g} \right)
- \mathcal {L}_{\left( \varphi_t^{-1} \right )_* {W}} (\varphi_t^* \hat{g} )  \\
& = -2 \left ( {\rm Ric}[g] -\frac {s_0}m g + p g \right).
\end{align*}
The second equation for $p$ in \eqref{eq crf orig} is readily seen to be true since the scalar curvature under both flows is kept constant as $s_0$.
\end{proof}

This lemma is particularly important to us because it enables us to prove the short time
existence of conformal Ricci flow by proving the short time existence of DeTurck conformal Ricci flow. The later will be proven to be a system of parabolic-elliptic equations (cf. Lemma \ref{linear-dtcrf}).

On the other hand, suppose that $(g(t), p(t))$, $t\in [0, T)$, solves conformal Ricci flow \eqref{eq crf orig} on ${\rm M}^m$ with initial metric $g_0$ and $\tilde g$ is any fixed metric on ${\rm M}^m$. We then consider the harmonic map flow
\begin{equation}\label{h-flow}
\frac{\partial }{\partial t} \varphi_t = \Delta_{g(t), \tilde{g}} \varphi_t,
\qquad \qquad  \varphi_0 = \operatorname{Id}
\end{equation}
for $\varphi_t: {\rm M}^m\to {\rm M}^m$, where the nonlinear Laplacian
in local coordinates is given as
$$
\left ( \Delta_{g_1, g_2}f \right )^\gamma = \Delta [g_1] f^\gamma +
\Gamma_{\alpha \beta}^\gamma [g_2] \frac{\partial f^\alpha}
{\partial x^i}  \frac{\partial f^\beta} {\partial x^j}  g_1^{ij} .
$$
The following lemma is useful to derive the uniqueness of conformal Ricci flow via the uniqueness of DeTurck conformal Ricci flow. It is therefore readily seen that the uniqueness of conformal Ricci flow with a given initial metric holds at least on compact manifolds with no boundary.

\begin{lemma} \label{lem crf to sol dcrf}
Let $(g(t), p(t)), \, t \in [0,T)$, be a solution to conformal Ricci flow (\ref{eq crf orig})
on manifold ${\rm M}^m$ with initial metric $g_0$. Assume the solution $\varphi_t:M
\rightarrow M$ to the harmonic map flow \eqref{h-flow} exists for $t \in [0,T)$. Let
$$
\hat{g}(t) :=  (\varphi_t^{-1}  )^* g(t) \text{ and } \hat{p}(x,t) := p(\varphi_t^{-1}(x) ,t).
$$
Then $(\hat{g}(t), \hat{p}(t)), t \in [0,T)$, is a solution to DeTurck conformal Ricci flow \eqref{eq dtcrf}  on manifold ${\rm M}^m$ with initial metric $g_0$.
\end{lemma}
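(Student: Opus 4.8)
The plan is to verify that the pulled-back pair $(\hat g(t),\hat p(t))$ satisfies each of the two equations in \eqref{eq dtcrf}, essentially reversing the computation of Lemma \ref{lem dcrf to sol crf} but now using the harmonic map flow \eqref{h-flow} as the gauge-fixing mechanism. The conceptual point is that running DeTurck's trick ``backwards'' requires the diffeomorphisms $\varphi_t$ to be produced by the harmonic map heat flow rather than by integrating $W$ directly; the classical fact (cf. \cite[\S 6]{Ha2}\cite{Shi}) is that for $\hat g(t)=(\varphi_t^{-1})^*g(t)$ the gauge term $\mathcal L_W\hat g$ with $W$ as in \eqref{vector field} arises precisely when $\varphi_t$ solves \eqref{h-flow} for the metrics $g(t)$ and the fixed background $\tilde g$.

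First I would treat the second, elliptic equation, which is the easy half. Since $g(t)$ solves \eqref{eq crf orig}, its scalar curvature is kept constant equal to $s_0$; because $\hat g(t)=(\varphi_t^{-1})^*g(t)$ is an isometric pullback, $s[\hat g(t)]=s_0$ as well, and all pointwise curvature quantities transform by the diffeomorphism. Thus $|{\rm Ric}[\hat g]-\tfrac{s_0}m\hat g|^2 = (\varphi_t^{-1})^*\,|{\rm Ric}[g]-\tfrac{s_0}m g|^2$, and since $\Delta[\hat g]=(\varphi_t^{-1})^*\circ\Delta[g]\circ\varphi_t^{\,*}$ acts naturally under pullback, applying $(\varphi_t^{-1})^*$ to the elliptic equation satisfied by $p(t)$ shows that $\hat p(x,t)=p(\varphi_t^{-1}(x),t)$ solves $(m-1)\Delta[\hat g]\hat p+s_0\hat p=-|{\rm Ric}[\hat g]-\tfrac{s_0}m\hat g|^2$, exactly the second line of \eqref{eq dtcrf}.

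The substantive step is the evolution equation for $\hat g(t)$. I would differentiate $\hat g(t)=(\varphi_t^{-1})^*g(t)$ in $t$, splitting into the term from the time-dependence of $g$ and the term from the time-dependence of $\varphi_t^{-1}$, in direct analogy with the first display in the proof of Lemma \ref{lem dcrf to sol crf}. The first term pulls back the conformal Ricci flow equation for $g$, contributing $-2\big({\rm Ric}[\hat g]-\tfrac{s_0}m\hat g+\hat p\,\hat g\big)$, where one uses that Ricci and scalar curvature are natural under isometric pullback. The second term produces a Lie derivative $\mathcal L_V\hat g$ where $V$ is the vector field generated by the flow of $\varphi_t^{-1}$; the crux is to identify $V$ with the DeTurck vector field $W^k=\hat g^{ij}(\Gamma^k_{ij}[\hat g]-\Gamma^k_{ij}[\tilde g])$ of \eqref{vector field}. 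This identification is exactly the content of the harmonic map flow \eqref{h-flow}: in coordinates $\Delta_{g,\tilde g}\varphi_t$ unwinds, after pushing forward by $\varphi_t$, into the difference-of-Christoffel-symbols expression for $\hat g$ and $\tilde g$, so that the Lie-derivative term is precisely $\mathcal L_W\hat g$. Combining the two contributions yields the first line of \eqref{eq dtcrf}.

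The main obstacle is the bookkeeping in this last identification: one must carefully track how the nonlinear Laplacian $\Delta_{g(t),\tilde g}\varphi_t$ on $g(t)$ transforms under the pushforward by $\varphi_t$ into a vector field on the target expressed through the pulled-back metric $\hat g$, and confirm that the sign and the reparametrization match those in \eqref{vector field} and \eqref{ode}. This is the standard but delicate ``harmonic map flow equals DeTurck gauge'' computation; I would cite \cite[\S 6]{Ha2}\cite{Shi} for the coordinate identity $\big(\Delta_{g,\tilde g}\varphi\big)\circ\varphi^{-1}$ reproducing $W$, and reduce the remaining verification to the naturality of curvature and Laplacian under diffeomorphism already used above. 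No new analytic input beyond the existence of $\varphi_t$ on $[0,T)$, which is assumed in the statement, is needed.
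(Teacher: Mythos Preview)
Your proposal is correct and follows essentially the same approach as the paper: reverse the computation of Lemma~\ref{lem dcrf to sol crf} and identify the gauge vector field arising from the time-dependence of $\varphi_t^{-1}$ with the DeTurck vector $W$ via the harmonic map flow. The paper's proof is a single sentence citing \cite[p.117]{CLN} for the identification $W=\Delta_{g(t),\tilde g}\varphi_t$, whereas you spell out the naturality arguments and cite \cite{Ha2,Shi} instead, but the strategy is identical.
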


\begin{proof}
This follows from a calculation similar to the one in the proof of Lemma \ref{lem dcrf to sol crf} after identifying the vector field ${W} =\Delta_{g(t), \tilde{g}} \varphi_t$(cf. \cite[p.117]{CLN}, for example).
\end{proof}

%%%%%%%%%%%%%%%%%%%%%%%%%%%%%%%%%%%%%%%%%%%%%%%%%%%%%%%%%%%%%%%%%%%%%%%%%%%%%%%%%%

\subsection{The linearization of DeTurck conformal Ricci flow}\label{sec linear cal}

In this subsection we compute the linearization of DeTurck confomral Ricci flow \eqref{eq dtcrf-1}.
To do so we set
$$
g_\lambda(t) = g(t)+\lambda h(t)
$$
for a family of symmetric 2-tensors $h(t)$ and for $\lambda\in (-\epsilon, \epsilon)$. We rewrite DeTurck conformal Ricci flow
\begin{equation}\label{set-up}
\mathcal {M}(g(t)) = \frac{\partial}{\partial t} g  + 2 \left ( {\rm Ric} -\frac {s_0}m g\right )
+ 2 \mathcal {P}(g) g -  \mathcal {L}_{W} g := \frac{\partial}{\partial t} g - \mathcal {F}(g(t))=0
\end{equation}
and calculate
$$
\frac d{d\lambda}|_{\lambda =0} \mathcal {M}(g_\lambda).
$$

To compute the linearization of $\mathcal {P}$ we first calculate
$$
\frac{d}{d\lambda} |_{\lambda=0}   \Delta [g_\lambda] \mathcal {P}(g_\lambda)
=  -  h_{ij}  \nabla^i \nabla^j \mathcal {P} - \frac 12 (2 \nabla^i
h_{ij} - \nabla_j h^i_{\ \, i} ) \nabla^j \mathcal {P}+ \Delta \mathcal {P}'
$$
 (cf, \cite[(S.5) on p.547]{CLN}, for example), where $\mathcal{P}' = \frac d{d\lambda}|_{\lambda=o}\mathcal{P}(g_\lambda)$. Next we may calculate
$$
\frac{d}{d\lambda} |_{\lambda=0} | {\rm Ric}[g_\lambda] - \frac {s_0}m g_\lambda |^2
$$
using the linearization of Ricci curvature (cf. \cite{Be} \cite{CLN})
\begin{equation}\label{lichnerowicz}
\aligned
\frac{d}{d\lambda} |_{\lambda=0} 2R_{ij}[g_\lambda] & =  -
\Delta h_{ij} - 2R_{ikjl} h_{kl} + R_{ik} h^k_{\ \, j} +
R_{jk}h^k_{\ \, i} \\ & - \nabla_i \nabla_j  h^k_{\ \, k} +
\nabla_i \nabla^k h_{kj} +  \nabla_j \nabla^k h_{ki} \endaligned
\end{equation}
 (see (2.31) in \cite{CLN}, for example).  In summary we have
$$
(m-1)\Delta\mathcal {P}' + s_0 \mathcal {P}'  - P^{ijkl}_1 \nabla_i\nabla_j h_{kl} -
P^{ijk}_2 \nabla_i h_{jk} + P_3^{ij}h_{ij} =0,
$$
that is
\begin{equation}\label{linear-P}
\mathcal {P}' = ((m-1)\Delta +s_0)^{-1}(P_1\ast\nabla^2 h + P_2\ast\nabla h +P_3\ast h),
\end{equation}
where $P_1, P_2, P_3$ are tensors that depend on curvature of $g(t)$ and up to second order spatial derivatives of $p$.

In the calculation of the linearization of $\mathcal{M}$ the crucial step is to calculate
\begin{align*}
\frac{d}{d\lambda} \vert_{\lambda=0} \mathcal {L}_{W_\lambda} g_\lambda = - \frac{d}{d\lambda}  \vert_{\lambda=0 } \delta^* [g_\lambda] \omega_\lambda,
\end{align*}
where $(\omega_\lambda)_i := (g_\lambda)_{ik} W_\lambda^k$ and
$W_\lambda =  -\tilde{g}^{-1} \delta[g_\lambda] G[g_\lambda](\tilde{g})$. In fact the key point of the DeTurck's trick is to collect the part of second order covariant derivatives of $h$
in the above and realize that it cancels the second line in \eqref{lichnerowicz}. To see that we first collect terms involving the first order covariant derivatives of $h$ in the fiollowing
\begin{equation*}
\frac{d}{d\lambda}\vert_{\lambda=0} (\omega_\lambda)_i =
\nabla^k h_{ki} - \frac 12 \nabla_i h^k_{\ \, k}  + \text{ other terms } \cdots.
\end{equation*}
Then we collect the second order covariant derivatives of $h$ in the following
\begin{equation}\label{eq 2nd term in linear DT operator}
 \frac{d}{d \lambda}|_{\lambda=0} ( (\delta_{g_\lambda})^*\omega_\lambda)_{ij}
=  - \nabla_i\nabla^k h_{ki} - \nabla_j\nabla^k h_{kj} + \nabla_i\nabla_j h^k_{\ \, k} + \text{other terms}.
\end{equation}
Therefore
\begin{equation}\label{linear-M}
\frac d{d\lambda}|_{\lambda=0}\mathcal {M} (g_\lambda)= \frac \partial{\partial t} h - \Delta h + 2\mathcal {P}' g + M_1^{ijk}\nabla_i h_{jk} +M_2^{ij}h_{ij},
\end{equation}
where $M_1$ depends only $g(t)$ and $\mathcal {P}(g)$ and $M_2$ depends on the curvature of $g(t)$ and $\mathcal {P}(g)$. To summarize we have

\begin{lemma}\label{linear-dtcrf} Suppose that $g(t)$, $t\in [0, T]$,  is a family of metrics such that the elliptic operator $(m-1)\Delta[g(t)] +s_0$ is invertible for all $t\in [0, T]$. Then the linearization of DeTurck conformal Ricci flow equations \eqref{eq dtcrf-1} at the metrics $g(t)$ in the directions of the symmetric 2-tensors $h(t)$ is given as
\begin{equation}\label{eq linear dtcrf-1}
D\mathcal {M}(g)(h) = \frac \partial{\partial t} h - \Delta h + 2\mathcal {P}' g + M_1\ast \nabla h + M_2\ast h
\end{equation}
where
$$
\mathcal {P}' = ((m-1)\Delta+s_0)^{-1}(P_1\ast\nabla^2 h+P_2\ast\nabla h +P_3\ast h),
$$
$P_1, P_2, P_3$ are tensors depending on curvature of $g(t)$ and up to the second order derivatives in spatial variables of $\mathcal {P}(g)$, and $M_1, M_2$ are tensors depending on curvature of $g(t)$ and function $\mathcal {P}(g)$.
\end{lemma}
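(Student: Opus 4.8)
The plan is to compute the Gateaux derivative of the operator $\mathcal{M}$ defined in \eqref{set-up} term by term, since $\mathcal{M}(g) = \frac{\partial}{\partial t}g + 2({\rm Ric} - \frac{s_0}{m}g) + 2\mathcal{P}(g)g - \mathcal{L}_W g$ is a sum of four pieces whose linearizations combine into \eqref{eq linear dtcrf-1}. Setting $g_\lambda = g + \lambda h$ and differentiating at $\lambda = 0$, the time-derivative term contributes $\frac{\partial}{\partial t}h$ immediately, while the constant-rescaling term $-2\frac{s_0}{m}g$ contributes $-2\frac{s_0}{m}h$, which is zeroth order in $h$ and is absorbed into $M_2 \ast h$.

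For the curvature term I would invoke the Lichnerowicz-type formula \eqref{lichnerowicz}: differentiating $2{\rm Ric}[g_\lambda]$ produces $-\Delta h_{ij}$ together with the curvature contractions $-2R_{ikjl}h^{kl} + R_{ik}h^k_{\ j} + R_{jk}h^k_{\ i}$ (zeroth order in $h$, hence entering $M_2 \ast h$) and the second line $-\nabla_i\nabla_j h^k_{\ k} + \nabla_i\nabla^k h_{kj} + \nabla_j\nabla^k h_{ki}$, whose second-order derivatives of $h$ are the obstruction to parabolicity. The pressure term $2\mathcal{P}(g)g$ differentiates to $2\mathcal{P}' g + 2\mathcal{P}(g)h$, the latter again of zeroth order in $h$ and absorbed into $M_2 \ast h$, while the nonlocal piece $2\mathcal{P}' g$ is retained explicitly.

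The crux, and the step I expect to demand the most care, is the DeTurck term $-\mathcal{L}_W g$, for which $\mathcal{L}_W g = 2\delta^*(\tilde g^{-1}\delta G(\tilde g))$ and $W_\lambda = -\tilde g^{-1}\delta[g_\lambda]G[g_\lambda](\tilde g)$. Here I would first differentiate $(\omega_\lambda)_i = (g_\lambda)_{ik}W_\lambda^k$ to collect its first-order-in-$h$ part, and then differentiate $(\delta^*_{g_\lambda}\omega_\lambda)_{ij}$ to isolate the genuinely second-order contribution recorded in \eqref{eq 2nd term in linear DT operator}. The essential point of the trick is that this second-order contribution, once carried through the sign in $-\mathcal{L}_W g$, exactly cancels the second line of \eqref{lichnerowicz}; consequently the only surviving second-order operator acting on $h$ is $-\Delta h$, and every remaining commutator and curvature term generated in this differentiation is at most first order in $h$ and is gathered into $M_1 \ast \nabla h$ and $M_2 \ast h$, with $M_1$ depending on $g$ and $\mathcal{P}(g)$ and $M_2$ additionally on the curvature of $g$.

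Finally, to put the nonlocal factor $\mathcal{P}'$ into the stated form I would linearize the defining elliptic identity $((m-1)\Delta[g] + s_0)\mathcal{P}(g) = -|{\rm Ric} - \frac{s_0}{m}g|^2$. The left side is differentiated using the variation of the Laplacian acting on the scalar $\mathcal{P}$ (the cited formula for $\frac{d}{d\lambda}|_{\lambda=0}\Delta[g_\lambda]\mathcal{P}(g_\lambda)$), whose extra terms carry at most first-order derivatives of $h$ multiplied by derivatives of $\mathcal{P}$; the right side is differentiated via \eqref{lichnerowicz} and yields up to second-order derivatives of $h$ contracted against ${\rm Ric} - \frac{s_0}{m}g$. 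Rearranging gives $((m-1)\Delta + s_0)\mathcal{P}' = P_1 \ast \nabla^2 h + P_2 \ast \nabla h + P_3 \ast h$, and solving for $\mathcal{P}'$ by inverting $(m-1)\Delta + s_0$, which is legitimate under the standing invertibility hypothesis, produces \eqref{linear-P} with $P_1, P_2, P_3$ built from the curvature of $g$ and from spatial derivatives of $\mathcal{P}(g)$ up to second order. Substituting this back and assembling the four linearized pieces gives exactly \eqref{eq linear dtcrf-1}, completing the proof.
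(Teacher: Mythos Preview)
Your proposal is correct and follows essentially the same route as the paper: you linearize each summand of $\mathcal{M}$ in turn, invoke \eqref{lichnerowicz} for the Ricci variation, use \eqref{eq 2nd term in linear DT operator} to exhibit the DeTurck cancellation of the bad second-order terms, and differentiate the defining elliptic identity for $\mathcal{P}$ to obtain \eqref{linear-P}. The organization and the key ingredients coincide with those in Section~\ref{sec linear cal}.
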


%%%%%%%%%%%%%%%%%%%%%%%%%%%%%%%%%%%%%%%%%%%%%%%%%

\subsection{Short Time Existence on Closed Manifolds} \label{subsec Holder space def}

Let us first solve conformal Ricci flow on a compact manifold ${\rm M}^m$ with no boundary. There are many books that are good for references in linear and nonlinar systems of parabolic equations. We will mostly use the book \cite[\S 5.1]{Lu}, in particular Theorem 5.1.21 in \cite{Lu} for existence and standard estimates. We adopt definitions of parabolic H\"{o}lder spaces from \cite[p. 175-177]{Lu}.  We use the same notations for parabolic H\"{o}lder spaces for functions and tensor fields when there is no confusion. To define the norms for tensor fields we may use the initial metric and local coordinate charts.

%%%%%%%%%%%%%%%%%%%%%%%

\subsubsection{Preliminaries}

Since we deal with systems of parabolic-elliptic equations we need to consider elliptic estimates with time parameter. There is an advantage to use only the super norm in time variable, as indicated by the following lemma.
\begin{lemma} \label{lem f c 0 parameter dependence}
Let $g(t), \, t \in [0,T]$, be a family of smooth Riemannian metric on
compact manifold ${\rm M}^m$ with no boundary.
Suppose operator $(m-1) \Delta[g(t)]+s_0$ is invertible for $t\in [0, T]$.
Then the equation
\begin{equation}\label{eq linearization of p 1 short}
(m-1) \Delta[g(t)] p(t) + s_0 p(t) = \gamma
\end{equation}
has a unique solution $p \in C^{0, 2+\alpha}$ for each $\gamma \in C^{0, \alpha}$. Moreover $p$ satisfies the estimate
\begin{equation}\label{eq ellp f est Laplace 2mc}
\|p\|_{C^{0, 2 + \alpha}} \leq C \| \gamma \|_{C^{0, \alpha}}
\end{equation}
for some constant $C$ independent of $\gamma$.
\end{lemma}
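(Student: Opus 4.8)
The plan is to treat $t$ as a parameter and reduce the statement to standard elliptic Schauder theory fiberwise, and then to upgrade the fiberwise estimates to estimates uniform in $t$ by a compactness argument on $[0,T]$; this is precisely where working only with the sup norm in the time variable pays off. First I would fix $t \in [0,T]$ and regard $L(t) := (m-1)\Delta[g(t)] + s_0$ as a second-order elliptic operator $C^{2+\alpha}({\rm M}) \to C^\alpha({\rm M})$ on the compact manifold. Since ${\rm M}$ is compact and $g(t)$ is smooth, $L(t)$ has smooth coefficients and is Fredholm of index zero between these spaces, so the hypothesis that $L(t)$ is invertible yields a bounded inverse $L(t)^{-1} \colon C^\alpha({\rm M}) \to C^{2+\alpha}({\rm M})$. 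Thus for each $\gamma(\cdot,t) \in C^\alpha({\rm M})$ there is a unique solution $p(\cdot,t) = L(t)^{-1}\gamma(\cdot,t) \in C^{2+\alpha}({\rm M})$, together with the elliptic estimate $\|p(t)\|_{C^{2+\alpha}} \leq C(t)\,\|\gamma(t)\|_{C^\alpha}$.

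The key issue, and the main obstacle, is to show that the constant $C(t)$ can be chosen independent of $t$. The Schauder constant for $L(t)$ depends only on the ellipticity constant and the $C^\alpha$ norms of the coefficients of $L(t)$, together with the operator norm of $L(t)^{-1}$. Because $t \mapsto g(t)$ is smooth on the compact interval $[0,T]$, the coefficients of $L(t)$ and the ellipticity constants are uniformly controlled, so $t \mapsto L(t)$ is continuous as a family of bounded operators $C^{2+\alpha}({\rm M}) \to C^\alpha({\rm M})$. Since inversion is continuous on the open set of invertible operators and $[0,T]$ is compact, the family $t \mapsto L(t)^{-1}$ is a continuous map into the bounded operators $C^\alpha({\rm M}) \to C^{2+\alpha}({\rm M})$, whence $\sup_{t\in[0,T]} \|L(t)^{-1}\| < \infty$. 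Setting $C := \sup_{t} C(t)$ and taking the supremum over $t$ in the fiberwise estimate gives $\|p\|_{C^{0,2+\alpha}} \leq C\,\|\gamma\|_{C^{0,\alpha}}$.

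It then remains to verify that $p$ genuinely lies in $C^{0,2+\alpha}$, i.e. that $t \mapsto p(t)$ is continuous into $C^{2+\alpha}({\rm M})$. For $t, t' \in [0,T]$ I would subtract the two defining equations to obtain
\[
L(t)\bigl(p(t) - p(t')\bigr) = \bigl(\gamma(t) - \gamma(t')\bigr) - \bigl(L(t) - L(t')\bigr)p(t'),
\]
and estimate the right-hand side in $C^\alpha({\rm M})$ using the continuity of $\gamma$ in $t$, the continuity of the coefficients of $L(t)$ in $t$, and the uniform bound on $\|p(t')\|_{C^{2+\alpha}}$ just obtained. Applying the uniform elliptic estimate to this equation shows $\|p(t) - p(t')\|_{C^{2+\alpha}} \to 0$ as $t \to t'$, which establishes the required continuity and completes the proof. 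The only genuinely delicate point is the uniform invertibility over $[0,T]$; everything else is a routine fiberwise application of elliptic theory, and the use of the sup norm in time avoids any need to control time-derivatives of $p$.
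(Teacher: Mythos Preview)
Your proof is correct and follows essentially the same strategy as the paper: apply Schauder theory fiberwise in $t$, then verify continuity of $p$ in the time variable. The paper's two-sentence proof merely asserts that continuity in $t$ follows from ``classical Bernstein estimates,'' whereas you give the more explicit (and arguably cleaner) argument via the difference equation $L(t)\bigl(p(t)-p(t')\bigr) = \bigl(\gamma(t)-\gamma(t')\bigr) - \bigl(L(t)-L(t')\bigr)p(t')$ together with the uniform bound on $L(t)^{-1}$; both routes achieve the same end.
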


\begin{proof} In the light of standard Schauder estimates for elliptic PDE we only need to varify that $p(t)$ is continuous in time variable, which is a consequence of classical Bernstein estimates.
\end{proof}

The following interpolatory inclusion will be useful in the proof of the short-time existences (cf.  \cite[Lemma 5.1.1]{Lu}).

\begin{lemma} \label{lem interpolation} There is a constant $C$
independent of $T$ such that for any $t_1, t_2 \in [0,T]$ we have
\[
\|h(t_1, \cdot) - h(t_2, \cdot)\|_{C^{k-2, \alpha}} \leq C
\cdot |t_1 -t_2 |\cdot \| h\|_{C^{1, k+\alpha}}.
\]
for all $h \in C^{1, k + \alpha}([0, T]\times {\rm M})$.
\end{lemma}

%%%%%%%%%%%%%%%%%

\subsubsection{On Linearized DeTurck conformal Ricci Flow}\label{on linearized Dconformal Ricci flow}

We first solve the linarized DeTurck conformal Ricci flow
\begin{equation}\label{eq linear dtcrf-2}
\left\{\aligned D\mathcal {M}(g)(h) & = \frac \partial{\partial t} h - \Delta h + 2\mathcal {P}' g + M_1\ast \nabla h + M_2\ast h = \gamma\\
h (0, \cdot) & = 0
\endaligned\right.
\end{equation}
for appropriately given metrics $g(t)$ for each $\gamma \in C^{0, \alpha}$. Namely,

\begin{proposition} \label{lem 4}
 Suppose that $g(t)$, $t\in [0, T]$,  is a family of metrics such that the elliptic operator $(m-1)\Delta[g(t)] +s_0$ is invertible for all $t\in [0, T]$.  Then, for $\gamma \in C^{0, \alpha}$, the initial value problem for \eqref{eq linear dtcrf-2} has a unique solution $h \in C^{1, 2+\alpha}$. Moreover
\begin{equation}\label{est of sol in haf}
 \|h\|_{C^{1, 2+\alpha}([0, T]\times{\rm M})}
\leq  C \| \gamma \|_{ C^{0, \alpha}([0, T]\times{\rm M})}.
\end{equation}
 \end{proposition}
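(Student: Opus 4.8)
The plan is to view the linearized operator as a perturbation of a genuinely parabolic operator by a single non-local term and to invert it by the contraction mapping theorem. Write $D\mathcal{M}(g)(h) = L_0 h + K h$, where
\[
L_0 h := \frac{\partial}{\partial t} h - \Delta h + M_1\ast\nabla h + M_2\ast h, \qquad K h := 2\mathcal{P}' g,
\]
and $\mathcal{P}' = ((m-1)\Delta+s_0)^{-1}(P_1\ast\nabla^2 h + P_2\ast\nabla h + P_3\ast h)$. The operator $L_0$ is strictly parabolic with H\"older coefficients, so by the standard linear existence and Schauder theory (Theorem 5.1.21 in \cite{Lu}) the problem $L_0 h = \gamma$, $h(0,\cdot)=0$, has a unique solution $h = L_0^{-1}\gamma \in C^{1,2+\alpha}$ satisfying $\|L_0^{-1}\gamma\|_{C^{1,2+\alpha}} \le C_0\|\gamma\|_{C^{0,\alpha}}$ with a constant $C_0$ that stays bounded as the length of the time interval shrinks. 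I would then solve the full equation via the fixed-point map $\Phi(h) := L_0^{-1}(\gamma - Kh)$ on the closed subspace $\{h\in C^{1,2+\alpha}:h(0,\cdot)=0\}$; a fixed point is exactly the desired solution, and since $\Phi(h_1)-\Phi(h_2) = -L_0^{-1}K(h_1-h_2)$, everything reduces to showing that $L_0^{-1}K$ is a contraction.

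The heart of the matter is estimating the non-local term. Although $\mathcal{P}'$ is built from $\nabla^2 h$ and is therefore \emph{formally} of the same order as the leading term $\Delta h$, the elliptic inverse $((m-1)\Delta+s_0)^{-1}$ furnished by Lemma \ref{lem f c 0 parameter dependence} recovers two spatial derivatives. The key point I would establish is the zeroth-order bound
\[
\|\mathcal{P}'(h)(t)\|_{C^{\alpha}(\mathrm{M})} \le C\,\|h(t)\|_{C^{\alpha}(\mathrm{M})},
\]
in which no derivatives of $h$ appear on the right-hand side. This follows by integrating by parts inside $((m-1)\Delta+s_0)^{-1}$ so as to transfer the two derivatives in $P_1\ast\nabla^2 h$ onto the (smooth) coefficients and the Green operator, exploiting the mapping properties of $((m-1)\Delta+s_0)^{-1}$ at the negative end of the H\"older scale; the first- and zeroth-order source terms are controlled the same way. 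Consequently $\|Kh(t)\|_{C^\alpha} \le C\|h(t)\|_{C^\alpha}$ for each $t$.

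With this estimate in hand the contraction comes from short time. Each difference $h_1-h_2$ vanishes at $t=0$, so Lemma \ref{lem interpolation} with $k=2$ gives $\|(h_1-h_2)(t)\|_{C^{0,\alpha}} \le Ct\,\|h_1-h_2\|_{C^{1,2+\alpha}} \le CT\,\|h_1-h_2\|_{C^{1,2+\alpha}}$ on $[0,T]$, and the same lemma makes the parabolic time-H\"older seminorm of $K(h_1-h_2)$ a multiple of $T^{1-\alpha/2}$. Combining with the previous bound,
\[
\|\Phi(h_1)-\Phi(h_2)\|_{C^{1,2+\alpha}} \le C_0\,\|K(h_1-h_2)\|_{C^{0,\alpha}} \le C_0\,C\,(T+T^{1-\alpha/2})\,\|h_1-h_2\|_{C^{1,2+\alpha}},
\]
so $\Phi$ is a contraction once $T$ is small. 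The contraction mapping theorem then delivers a unique fixed point and the bound $\|h\|_{C^{1,2+\alpha}} \le C\|\gamma\|_{C^{0,\alpha}}$, which is \eqref{est of sol in haf}. To cover an arbitrary $T$ on which $(m-1)\Delta[g(t)]+s_0$ stays invertible, I would partition $[0,T]$ into finitely many subintervals short enough for the contraction to apply and solve successively, matching data at the endpoints.

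The step I expect to be the main obstacle is precisely this zeroth-order control of $\mathcal{P}'$: one must show that the non-local term, though assembled from $\nabla^2 h$, is genuinely subordinate to the parabolic part so that its contribution becomes small over short time. This forces one to use the two-derivative elliptic smoothing in a norm ($C^{\alpha}$) weaker than the one in which Lemma \ref{lem f c 0 parameter dependence} is phrased ($C^{2+\alpha}$), and to keep track of the regularity of the coefficients $P_1,P_2,P_3$ — which involve the curvature of $g(t)$ and up to two spatial derivatives of $\mathcal{P}(g)$ — so that the integration by parts is legitimate. Once this is secured, the remaining ingredients are routine Schauder theory.
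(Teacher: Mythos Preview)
Your architecture --- split off the non-local piece $Kh=2\mathcal P'g$ and run a contraction --- is exactly the paper's, but the paper executes the contraction in a different space and thereby avoids the step you correctly identify as the obstacle. The paper sets up the fixed point in the \emph{weaker} space $E_1=\{h\in C^{0,2+\alpha}:h(0,\cdot)=0\}$, not in $C^{1,2+\alpha}$. For $\tilde h\in E_1$ the source $P_1\ast\nabla^2\tilde h+\cdots$ lies in $C^{0,\alpha}$, so Lemma~\ref{lem f c 0 parameter dependence} \emph{as stated} gives $\|\mathcal P'(\tilde h)\|_{C^{0,2+\alpha}}\le C\|\tilde h\|_{C^{0,2+\alpha}}$. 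The point is that the forcing $2\mathcal P'(\tilde h)g$ is then two orders smoother than a generic $C^{0,\alpha}$ right-hand side, so the parabolic Schauder estimate upgrades the difference $v=\Psi(\tilde h_1)-\Psi(\tilde h_2)$ all the way to $C^{1,4+\alpha}$; Lemma~\ref{lem interpolation} with $k=4$ (not $k=2$) and $v(0)=0$ then yields $\|v\|_{C^{0,2+\alpha}}\le CT^*\|\tilde h_1-\tilde h_2\|_{C^{0,2+\alpha}}$. The small factor comes from \emph{higher parabolic regularity}, not from any zeroth-order control of $\mathcal P'$.

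Your route, by contrast, stands or falls on the bound $\|\mathcal P'(h)\|_{C^{\alpha}}\le C\|h\|_{C^{\alpha}}$. This is true --- it is the $C^\alpha$-boundedness of the order-zero operator $((m-1)\Delta+s_0)^{-1}\circ(P_1\ast\nabla^2)$ --- but it is \emph{not} a consequence of Lemma~\ref{lem f c 0 parameter dependence}, and ``integrating by parts inside the Green operator'' puts you squarely into Calder\'on--Zygmund theory (equivalently, elliptic regularity in negative-order H\"older/Zygmund spaces), which the paper never invokes. So your plan can be completed, but only by importing an analytic ingredient external to the paper; the paper's device of trading the two spare spatial derivatives of $\mathcal P'g$ for a jump to $C^{1,4+\alpha}$ keeps everything within plain Schauder theory and Lemma~\ref{lem f c 0 parameter dependence} exactly as written.
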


\begin{proof}  To use contractive mapping type argument we consider the Banach space
$$
E_1([0,T^*]) = \{h\in C^{0, 2+\alpha}: h(0, \cdot) = 0\}.
$$
Given a $\tilde{h} \in E_1([0, T^*])$, based on Theorem 5.1.21 in \cite{Lu}, we first solve an usual system of linear parabolic equations
\begin{equation}\label{linearized Dconformal Ricci flow}
\left\{
\aligned
& \frac \partial{\partial t} h - \Delta h + M_1\ast \nabla h + M_2\ast h = \tilde\gamma  \\
& h(0, \cdot) =0,
\endaligned\right.
\end{equation}
where $\tilde\gamma= \gamma - 2{\mathcal P}'(\tilde h) g \in C^{0, \alpha}$
and ${\mathcal P}'(\tilde{h})$ is defined by \eqref{linear-P}. We remark here that it takes some work to extend Theorem 5.1.21 in \cite{Lu} to be applicable to our context, but there is no significant
issues in doing so.  Hence we may define a map
\[
 \Psi: E_1([0,T^*]) \rightarrow E_1([0,T^*])
 \]
by $\Psi (\tilde{h}) =h$. Note that, if set
$$
v = \Psi(\tilde h_1) - \Psi(\tilde h_2),
$$
then $v$ satisfies
$$
\left\{
\aligned
& \frac \partial{\partial t} v - \Delta v + M_1\ast \nabla v + M_2\ast v = 2( {\mathcal P}'(\tilde h_2) -{\mathcal P}'(\tilde h_1))g \\
& v(0, \cdot ) =0,
\endaligned\right.
$$
Due to the fact that
$$
\|( {\mathcal P}'(\tilde h_2) -{\mathcal P}'(\tilde h_1))g\|_{C^{0, 2+\alpha}} \leq C \|\tilde h_1 - \tilde h_2\|_{C^{0, 2+\alpha}}
$$
from \eqref{linear-P} and Lemma \ref{lem f c 0 parameter dependence}, we obtain again from the
estimates based on Theorem 5.1.21 in \cite{Lu} that
$$
\|v\|_{C^{1, 4+\alpha}} \leq C \|\tilde h_1 - \tilde h_2\|_{C^{0, 2+\alpha}}.
$$
Hence, in the light of Lemma \ref{lem interpolation}, we have
$$
\|v(t_1) - v(t_2) \|_{C^{2, \alpha}} \leq C\cdot |t_1-t_2|\cdot \|\tilde h_1 - \tilde h_2\|_{C^{0, 2+\alpha}}.
$$
In particular
$$
\|\Psi(\tilde h_1) - \Psi(\tilde h_2)\|_{C^{0, 2+\alpha}} \leq CT^*\|\tilde h_1 - \tilde h_2\|_{C^{0, 2+\alpha}}.
$$

To apply contractive mapping theorem we observe that
$$
\|\Psi(\tilde h)\|_{C^{0, 2+\alpha}} \leq \|\Psi(0)\|_{C^{0, 2+\alpha}} + CT^*\|\tilde h\|_{C^{0, 2+\alpha}},
$$
where
$$
\|\Psi(0)\|_{C^{1, 2+\alpha}} \leq C_0 \|\gamma\|_{C^{0, \alpha}},
$$
for some constant $C_0$, from the estimates based on Theorem 5.1.21 in \cite{Lu}. Thus
$$
\Psi: B_R = \{h\in E_1([0, T^*]): \|h\|_{C^{0, 2+\alpha}}\leq R\} \to B_R
$$
for $R=2C_0\|\gamma\|_{C^{0, \alpha}}$ is a contractive mapping when $T^*$ is appropriately small. Then, by the uniqueness of the solution to linear parabolic equations \eqref{eq linear dtcrf-2}, one may extend the solution to $[0, T]$ for \eqref{eq linear dtcrf-2} by steps in time of length $T^*$. The estimate \eqref{est of sol in haf} then follows from the estimates based on Theorem 5.1.21 in \cite{Lu}.
\end{proof}

To summarize we have established that
$$
D{\mathcal M}(g): C^{1, 2+\alpha}([0, T]\times{\rm M}) \bigcap\{h(0, \cdot) = 0\} \to C^{0, \alpha}([0, T]\times{\rm M})
$$
is an isomorphism, provided that $g(t)$ satisfies the assumptions in Proposition \ref{lem 4}.

%%%%%%%%%%%%%%%%%%

\subsubsection{Implicit Function Theorem Argument}\label{ex on closed mfld}

Next we solve DeTurck conformal Ricci flow and then conformal Ricci flow. Our approach is to use an implicit function theorem. Let us start with the following simple implicit function theorem.

\begin{lemma}\label{implicit} Suppose $X$ and $Y$ are Banach spaces and
$$
\mathcal {H}: X\to Y
$$
is a $C^1$ map. Suppose that there is a point $x_0\in X$ such that
there are positive numbers $\delta$ and $C$, and
$$
\|(D\mathcal {H}(x))^{-1}\| \leq C \quad \forall \quad x \in B_\delta (x_0)
$$
and
$$
\|D\mathcal {H}(x_1) - D\mathcal {H}(x_2)\| \leq \frac 1{2C}, \quad \forall \quad x_1, x_2
\in B_\delta (x_0).
$$
Then, if
$$
\|\mathcal {H}(x_0)\| \leq \frac \delta{2C},
$$
then there is $x\in B_\delta (x_0)$ such that
$$
\mathcal {H}(x) = 0.
$$
\end{lemma}

To apply the above implicit function theorem to the map for solving DeTurck conformal Ricci flow
$$
\mathcal {M}: C^{1, 2+\alpha}([0, T]\times {\rm M})\bigcap\{g(0) = g_0\} \to C^{0, \alpha}([0, T]\times {\rm M})
$$
we need to show that $\mathcal {M}$ is continuously differentiable.  In fact we have

\begin{lemma}\label{continuity-DM} Suppose that ${\rm M}^m$ is a compact manifold without boundary. Suppose that  $g(t)\in C^{1, 2+\alpha}([0, T]\times {\rm M})$ such that the elliptic operator
$(m-1)\Delta[g(t)] +s_0$ is invertible for all $t\in [0, T]$. Then there is $\delta_0>0$ such that
$$
\|D\mathcal {M}(g_1) - D\mathcal {M}(g_2)\|_{L(C^{1, 2+\alpha}, C^{0, \alpha})} \leq C\|g_1 - g_2\|_{C^{1, 2+\alpha}}
$$
for $\|g_i -  g\|_{C^{1, 2+\alpha}} \leq \delta_0$ in $C^{1, 2+\alpha}([0, T]\times {\rm M})$ and $i=1,2$.
\end{lemma}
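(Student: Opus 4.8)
The plan is to read the structure of $D\mathcal{M}(g)$ directly off Lemma \ref{linear-dtcrf} and then show that every $g$-dependent coefficient appearing there is a locally Lipschitz function of $g$ in the relevant parabolic Hölder norms; the asserted operator bound will follow by collecting terms and invoking that $C^{0,\alpha}$ is a Banach algebra. First I would observe that, since $\mathcal{M}(g) = \partial_t g - \mathcal{F}(g)$ as in \eqref{set-up}, the time-derivative part is linear in $g$ and cancels in the difference, so that $D\mathcal{M}(g_1)(h) - D\mathcal{M}(g_2)(h)$ only involves the spatial operator
\[
h \mapsto -\Delta[g]h + 2\,\mathcal{P}'[g](h)\,g + M_1[g]\ast\nabla h + M_2[g]\ast h .
\]
Choosing $\delta_0$ small I would secure two uniform facts on $B_{\delta_0}(g)$: every $g_i$ stays uniformly equivalent to $g$, so $g_i^{-1}$ and hence $\Gamma[g_i]$ and ${\rm Rm}[g_i]$ are uniformly bounded; and, because invertibility of $(m-1)\Delta[g_i]+s_0$ is an open condition, the operators $L[g_i]:=(m-1)\Delta[g_i]+s_0$ are uniformly invertible with $\|L[g_i]^{-1}\|_{C^{0,\alpha}\to C^{0,2+\alpha}}\le C$ uniformly in $t\in[0,T]$ and in $i$, by Lemma \ref{lem f c 0 parameter dependence}.

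For the purely local coefficients this is routine. I would write the operator in local coordinates so that $-\Delta[g]h = -g^{ij}\partial_i\partial_j h + (\text{first order})$, and note that $g\mapsto g^{-1}$, $g\mapsto \Gamma[g]$ and $g\mapsto {\rm Rm}[g]$ are smooth algebraic/differential expressions which, using that $g\in C^{1,2+\alpha}$ carries two spatial derivatives, map into $C^{0,\alpha}$ with locally Lipschitz dependence, e.g. $\|g_1^{ij}-g_2^{ij}\|_{C^{0,\alpha}}\le C\|g_1-g_2\|_{C^{1,2+\alpha}}$. Since $\partial_i\partial_j h$, $\nabla h$ and $h$ all lie in $C^{0,\alpha}$ with norm controlled by $\|h\|_{C^{1,2+\alpha}}$, the Banach-algebra property of $C^{0,\alpha}$ turns each such coefficient difference into a contribution bounded by $C\|g_1-g_2\|_{C^{1,2+\alpha}}\|h\|_{C^{1,2+\alpha}}$.

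The hard part, and the only genuinely nonlocal part, is the dependence on $g$ of the inverse operator $L[g]^{-1}$ hidden in $\mathcal{P}(g)$ and in $\mathcal{P}'[g]$. For $\mathcal{P}(g)=L[g]^{-1}F(g)$ with $F(g)=|{\rm Ric}[g]-\tfrac{s_0}{m}g|^2$ I would split
\[
\mathcal{P}(g_1)-\mathcal{P}(g_2) = L[g_1]^{-1}\big(F(g_1)-F(g_2)\big) + \big(L[g_1]^{-1}-L[g_2]^{-1}\big)F(g_2),
\]
bound the first term by the uniform estimate of Lemma \ref{lem f c 0 parameter dependence} together with the local Lipschitz bound $\|F(g_1)-F(g_2)\|_{C^{0,\alpha}}\le C\|g_1-g_2\|_{C^{1,2+\alpha}}$, and treat the second term by the resolvent identity
\[
L[g_1]^{-1}-L[g_2]^{-1} = L[g_1]^{-1}\big(L[g_2]-L[g_1]\big)L[g_2]^{-1}.
\]
Here $L[g_2]-L[g_1]=(m-1)(\Delta[g_2]-\Delta[g_1])$ is a second-order operator whose coefficients are differences of $\Delta$-coefficients, controlled by $\|g_1-g_2\|_{C^{1,2+\alpha}}$; applied to $\mathcal{P}(g_2)=L[g_2]^{-1}F(g_2)\in C^{0,2+\alpha}$ it lands in $C^{0,\alpha}$, and $L[g_1]^{-1}$ returns it to $C^{0,2+\alpha}$. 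This yields $\|\mathcal{P}(g_1)-\mathcal{P}(g_2)\|_{C^{0,2+\alpha}}\le C\|g_1-g_2\|_{C^{1,2+\alpha}}$, in particular Lipschitz control of $\mathcal{P}(g)$ and of its spatial derivatives up to order two. The same two-term split, now applied to $\mathcal{P}'[g](h)=L[g]^{-1}(P_1\ast\nabla^2 h+P_2\ast\nabla h+P_3\ast h)$ and using that $P_1,P_2,P_3,M_1,M_2$ depend on $g$ only through its curvature and through $\mathcal{P}(g)$ up to second spatial derivatives — all just shown to be locally Lipschitz in $g$ — gives $\|\mathcal{P}'_1(h)-\mathcal{P}'_2(h)\|_{C^{0,2+\alpha}}\le C\|g_1-g_2\|_{C^{1,2+\alpha}}\|h\|_{C^{1,2+\alpha}}$. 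Summing the local contributions together with this nonlocal one over all terms of $D\mathcal{M}(g_1)-D\mathcal{M}(g_2)$ produces the claimed bound. The main obstacle is keeping the elliptic inverse estimates and the resolvent argument uniform in the time parameter $t\in[0,T]$; this is exactly what the sup-in-time formulation of Lemma \ref{lem f c 0 parameter dependence} is designed to deliver, and its stability under small $C^{1,2+\alpha}$ perturbations of $g$ is what fixes the radius $\delta_0$.
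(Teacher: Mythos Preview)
Your proposal is correct and follows essentially the same approach as the paper: both decompose $D\mathcal{M}(g_1)(h)-D\mathcal{M}(g_2)(h)$ term by term, bound the local coefficient differences by Lipschitz estimates in $C^{0,\alpha}$, and handle the nonlocal $\mathcal{P}'$ piece via the elliptic inverse using Lemma~\ref{lem f c 0 parameter dependence}. Your use of the resolvent identity $L[g_1]^{-1}-L[g_2]^{-1}=L[g_1]^{-1}(L[g_2]-L[g_1])L[g_2]^{-1}$ is just the abstract reformulation of the paper's device of applying $(m-1)\Delta+s_0$ to $\mathcal{P}'[g_2]-\mathcal{P}'[g_1]$ and expanding; you are also a bit more explicit than the paper in first establishing $\|\mathcal{P}(g_1)-\mathcal{P}(g_2)\|_{C^{0,2+\alpha}}\le C\|g_1-g_2\|_{C^{1,2+\alpha}}$, which is indeed needed to control the $P_i$-coefficients that depend on $\nabla^2\mathcal{P}$.
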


\begin{proof} We calculate, for any $h\in C^{1, 2+\alpha}\bigcap\{h(0, \cdot) = 0\}$,
$$
\aligned
(D\mathcal {M}(g_1) - D\mathcal {M}(g_2))h & = (\Delta[g_2] - \Delta[g_1])h
+ 2 \mathcal {P}'[g_1](g_1 - g_2)\\  + 2(\mathcal {P}'[g_1] - \mathcal {P}'[g_2])g_2  &
+ M_1[g_1]\ast (\nabla[g_1] h  - \nabla[g_2] h)  \\ + (M_1 [g_1]- M_1[g_2])\ast \nabla[g_2] h
& + (M_2 [g_1]- M_2[g_2])\ast h.
\endaligned
$$
It is easily seen that
$$
\| \Delta [g_1] h - \Delta [g_2]h\|_{C^{0, \alpha}} \leq C \|g_1 - g_2\|_{C^{1,  2+\alpha}}\|h\|_{C^{1, 2+\alpha}}
$$
$$
\aligned
\|M_1[g_1]\ast (\nabla[g_1] h  - \nabla[g_2] h)  & + (M_1 [g_1]- M_1[g_2])\ast \nabla[g_2] h\|_{C^{0, \alpha}} \\
\leq  & C\|g_1 - g_2\|_{C^{1, 2+\alpha}} \|h\|_{C^{1, 2+\alpha}}
\endaligned
$$
and
$$
\|(M_2[g_1] - M_2[g_2])h\|_{C^{0, \alpha}}\leq C \|g_1 - g_2\|_{C^{1, 2+\alpha}} \|h\|_{C^{1, 2+\alpha}}.
$$
It is also easy to see that
$$
\|\mathcal {P}'[g_1](g_1 -g_2)\|_{C^{0, \alpha}} \leq C\|g_1 - g_2\|_{C^{1, 2+\alpha}} \|h\|_{C^{1, 2+\alpha}}.
$$
under the assumption that $\|g_i - g\|_{C^{1, 2+\alpha}}\leq \delta_0$ for $i=1,2$ from the definition of ${\mathcal P}'$ in \eqref{linear-P}.  For the last remaining term we write
$$
\aligned
((m-1)\Delta +s_0) & (\mathcal {P}'[g_2] - \mathcal {P}'[g_1]) = (m-1)(\Delta[g_1] - \Delta[g_2])\mathcal {P}'[g_1] \\
+ (P_1[g_2]  & - P_1[g_1]) \ast\nabla^2[g_2] h + P_1[g_1]\ast(\nabla^2  [g_2]- \nabla^2[g_1])h\\
+ (P_2[g_2]  & -  P_2[g_1])\ast \nabla[g_2] h + P_2[g_1]\ast (\nabla[g_2] - \nabla [g_1])h \\
+ (P_3[g_2]  & - P_3[g_1])\ast h
\endaligned
$$
and apply Lemma \ref{lem f c 0 parameter dependence}. Therefore
$$
\|\mathcal {P}'[g_1] - \mathcal {P}'[g_2]\|_{C^{0, \alpha}} \leq C \|g_1 - g_2\|_{C^{1, 2+\alpha}} \|h\|_{C^{1, 2+\alpha}},
$$
which implies
$$
\|(\mathcal {P}'[g_1] - \mathcal {P}' [g_2])g\|_{C^{0, \alpha}} \leq C \|g_1 - g_2\|_{C^{1, 2+\alpha}} \|h\|_{C^{1, 2+\alpha}}.
$$
Thus the proof is complete.
\end{proof}

Next, to apply Lemma \ref{implicit}, we consider the initial approximate solution as follows:
\begin{equation}\label{x_0}
\bar g(t) = g_0 + t \mathcal {F}(g_0)
\end{equation}
where $\mathcal {F}$ is introduced in \eqref{set-up}. We then calculate
\begin{equation}\label{H(x_0)}
\aligned
\mathcal {M}(\bar g) & = - \mathcal {F}(g_0 + t\mathcal {F}(g_0))  + \mathcal {F}(g_0)\\
& = - t \int_0^1D\mathcal {F} (g_0 + \theta t \mathcal {F}(g_0))d\theta\cdot \mathcal {F}(g_0).
\endaligned
\end{equation}
Now we are ready to state and prove the short time existence theorem for conformal Ricci flows.

\begin{theorem}\label{short time existence on cpt} Let ${\rm M}^m$ be a compact manifold with no boundary. Suppose that $g_0\in C^{4, \alpha}$ is a Riemannian metric on ${\rm M}$ such that the scalar curvature $s[g_0] = s_0$ is a constant and that the elliptic operator $(m-1)\Delta[g_0] + s_0$ is invertible. Then there exists a small positive number $T$ such that the conformal Ricci flow $g(t)$ exists in $C^{1, 2+\alpha}$ from the initial metric $g_0$ for $t\in [0, T]$.
\end{theorem}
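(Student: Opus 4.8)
The plan is to first prove short time existence for the DeTurck conformal Ricci flow \eqref{eq dtcrf-1}, rewritten as $\mathcal{M}(g)=0$ in \eqref{set-up}, via the implicit function theorem of Lemma \ref{implicit}, and then to carry the solution back to genuine conformal Ricci flow \eqref{eq crf orig} through the diffeomorphism correspondence of Lemma \ref{lem dcrf to sol crf}. I would fix the background metric appearing in \eqref{vector field} to be $\tilde g = g_0$, so that the DeTurck vector field $W$ vanishes at $t=0$ and the gauge ODE \eqref{ode} is well posed near the initial time.

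To run Lemma \ref{implicit} I would set $Y = C^{0,\alpha}([0,T]\times{\rm M})$, take $X$ to be the affine space $\{g\in C^{1,2+\alpha}([0,T]\times{\rm M}):g(0,\cdot)=g_0\}$ with tangent space $\{h:h(0,\cdot)=0\}$, and choose the base point $x_0=\bar g$ to be the approximate solution \eqref{x_0}. The three hypotheses are then supplied by the earlier results. Because $g_0$ satisfies the invertibility of $(m-1)\Delta[g_0]+s_0$ and invertibility is an open condition, for $T$ small the spatial operator $(m-1)\Delta[\bar g(t)]+s_0$ --- and that of every metric in a $C^{1,2+\alpha}$-ball about $\bar g$ --- remains invertible for all $t\in[0,T]$, so Proposition \ref{lem 4} gives the isomorphism bound $\|(D\mathcal{M}(g))^{-1}\|\le C$. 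Lemma \ref{continuity-DM} supplies the Lipschitz estimate $\|D\mathcal{M}(g_1)-D\mathcal{M}(g_2)\|\le C'\|g_1-g_2\|_{C^{1,2+\alpha}}$, which on a ball $B_\delta(\bar g)$ with $\delta\le(4CC')^{-1}$ yields $\|D\mathcal{M}(g_1)-D\mathcal{M}(g_2)\|\le\tfrac{1}{2C}$. Finally \eqref{H(x_0)} expresses $\mathcal{M}(\bar g)=-t\int_0^1 D\mathcal{F}(g_0+\theta t\mathcal{F}(g_0))\,d\theta\cdot\mathcal{F}(g_0)$, whose $C^{0,\alpha}([0,T]\times{\rm M})$ norm is bounded by a positive power of $T$ owing to the overall factor $t\le T$; hence $\|\mathcal{M}(\bar g)\|\le\delta/(2C)$ once $T$ is small relative to the already fixed $\delta$ and $C$. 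Lemma \ref{implicit} then produces $\hat g\in B_\delta(\bar g)$ with $\mathcal{M}(\hat g)=0$, i.e. a $C^{1,2+\alpha}$ solution of \eqref{eq dtcrf-1} on $[0,T]$ with $\hat g(0)=g_0$ and associated conformal pressure $\hat p$.

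To pass back to conformal Ricci flow I would integrate the ODE \eqref{ode} for the family $\varphi_t$ generated by $W$ of \eqref{vector field}; as ${\rm M}$ is compact and $W$ is sufficiently regular (vanishing at $t=0$ for $\tilde g=g_0$), $\varphi_t$ exists as diffeomorphisms for $t\in[0,T]$ after shrinking $T$ if necessary. Lemma \ref{lem dcrf to sol crf} then shows $g(t)=\varphi_t^*\hat g(t)$ and $p(t)=\hat p(\varphi_t(\cdot),t)$ solve \eqref{eq crf orig} with $g(0)=g_0$, and the pullback keeps $g(t)$ in the class $C^{1,2+\alpha}$. Finally, as recorded in Section \ref{intr crf}, on a compact manifold \eqref{eq crf orig} and \eqref{eq crf orig-0} are equivalent by the uniqueness of bounded solutions of the linear heat equation governing the scalar curvature, so $g(t)$ is the sought conformal Ricci flow.

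The main obstacle will be making the constants $C$ and $C'$ genuinely uniform and choosing the parameters in the right order. Note that $\bar g$ is \emph{not} close to $g_0$ in the full parabolic norm, since $\partial_t\bar g=\mathcal{F}(g_0)$ is fixed; what is small for $T$ small is the spatial deviation $\sup_{t\le T}\|\bar g(t)-g_0\|_{C^{2,\alpha}({\rm M})}$, and it is this spatial closeness that controls the invertibility of $(m-1)\Delta[\cdot]+s_0$ and the curvature- and $\mathcal{P}$-dependent constants in Proposition \ref{lem 4} and Lemma \ref{continuity-DM}. I would therefore first fix a spatial neighborhood of $g_0$ on which these constants are uniform, then fix $\delta$ from the Lipschitz threshold, and only then take $T$ small enough both to keep $\bar g$ and $B_\delta(\bar g)$ inside that neighborhood and to force $\|\mathcal{M}(\bar g)\|\le\delta/(2C)$. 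A related point, already flagged after Proposition \ref{lem 4}, is that the parabolic existence constant must not blow up as $T\to0$; this is what lets the smallness of $\|\mathcal{M}(\bar g)\|$ (a positive power of $T$) beat the fixed threshold $\delta/(2C)$, and it follows from the stepwise-in-time construction used there.
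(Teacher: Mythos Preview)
Your proposal is correct and follows essentially the same approach as the paper: apply the implicit function theorem (Lemma \ref{implicit}) to $\mathcal{M}$ at the base point $\bar g$ of \eqref{x_0}, using Proposition \ref{lem 4} for the uniform isomorphism bound, Lemma \ref{continuity-DM} for the Lipschitz control of $D\mathcal{M}$, and \eqref{H(x_0)} for the smallness of $\mathcal{M}(\bar g)$, then transfer the DeTurck solution to conformal Ricci flow via Lemma \ref{lem dcrf to sol crf}. Your additional remarks on the order of parameter choices and on why $C$ must remain bounded as $T\to 0$ are welcome elaborations but do not alter the strategy.
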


\begin{proof} First we notice that Proposition \ref{lem 4} holds for the family of metrics $\bar g(t) =g_0 + t\mathcal{F}(g_0)$ in $C^{1, 2+\alpha}$ for some appropriately small $T$ such that the elliptic operator $(m-1)\Delta[\bar g] +s_0$ is invertible for all $t\in [0, T]$. Therefore there is a constant $C$ and a small number $\delta_0$ such that
$$
\|(D\mathcal {M} (g))^{-1}\|\leq C
$$
and
$$
\|D\mathcal {M} (g_1) - D\mathcal {M}(g_2)\|\leq \frac 1{2C}
$$
for all $g, g_1, g_2, \in B(\delta_0)$, where $B(\delta_0) = \{g\in C^{1, 2+\alpha}: \|g -\bar g\|_{C^{1, 2+\alpha}} \leq \delta_0\}$, according to Lemma \ref{continuity-DM}. Next, choose even smaller $T$ if necessary, we observe from \eqref{H(x_0)} that
$$
\|\mathcal {M}(\bar g)\|_{C^{0, \alpha}} \leq \frac {\delta_0} {2C}.
$$
Hence Lemma \ref{implicit} implies that DeTurck conformal Ricci flow $\hat g(t)$ exists in $C^{1, 2+\alpha}$ with the initial metric $g_0$. Therefore, applying Lemma \ref{lem dcrf to sol crf},
we obtain the short time existence for conformal Ricci flow from the initial metric $g_0$, since \eqref{ode} is always solvable at least for short time.
\end{proof}

%%%%%%%%%%%%%%%%%%%%%%%%%%%%%%%%%%%%%%%%%%%%%%%%%%%%%%%%%%%%%%%%

\subsection{Short Time Existence on Asymptotically Flat Manifolds}

In this subsection we establish the short time existence of conformal Ricci flow on asymptotically flat manifolds.
The idea of the proof is the same as the proof in last subsection.  We remark here that the short time existence of Ricci flow on asymptotically flat manifolds has been established independently in \cite{DM} and \cite{OW}. The approach in \cite{DM} is to use the short time existence result in
\cite{Shi} and the maximum principle to show that Ricci flow in fact remains to be asymptotically flat when starting from asymptotically flat metric; while the approach in \cite{OW} is to establish
short time existence of Ricci flow based on weighted function spaces. Our approach is similar to the one in \cite{OW} since no short time existence of conformal Ricci flow on non-compact manifolds is available and no maximum principle for conformal Ricci flow is available.

%%%%%%%%%%%%%%%%%%%%%%

\subsubsection{Analysis on Asymptotically Flat Manifolds}

We first briefly introduce asymptotically flat manifolds according to \cite{LP} and then construct
appropriate parabolic H\"{o}lder spaces on asymptotically flat manifolds. The following is from  \cite[Definition 6.3]{LP}.

\begin{definition}
A Riemannian manifold ${\rm M}^m$ with $C^2$-metric $g$  is called
asymptotically flat of order $\tau >0$
if there exists a decomposition ${\rm M}={\rm M}_0 \cup {\rm M}_{\infty}$
(with ${\rm M}_0$ compact) and a diffeomorphism $\Psi: {\rm M}_{\infty} \rightarrow \mathbb{R}^n\setminus B_{\rm R}(\vec{0})$, for some ${\rm R}>0$,
satisfying:
$$
g (z)=g_e(z) + O(\rho^{-\tau}),
 \quad \partial_k g (z)=O(\rho^{-\tau-1}),
 \quad \partial_k \partial_l g(z)=O(\rho ^{-\tau-2}),
$$
where $g_e$ is the standard Euclidean metric and $\rho=\rho(z) =\vert z \vert \to \infty$ in the coordinates $z= (z^1,\cdots,z^m)$ induced on $M_{\infty}$ by the diffeomorphism $\Psi$.
\end{definition}

We adopt the definition of weighted H\"{o}lder spaces $C^{k, \alpha}_\beta$ from \cite[p. 75]{LP}. Again we will use the same notations for weighted H\"{o}lder spaces for functions and tensor fields if there is no confusion. We use local coordinate charts and a given metric whenever it is necessary for the definition of H\"{o}lder spaces for tensor fields on asymptotically flat manifolds.

Fix a number $T>0$, analogous to \cite[p. 175-177]{Lu},  we define parabolic weighted H\"{o}lder spaces
$$
C^{0, k+ \alpha}_{\beta}
: = \{ h\in C([0, T]\times{\rm M}): h(t) \in C^{k, \alpha}_\beta \ \text{and} \ \max_{t \in [0,T]}
 \| h(t) \|_{C^{k, \alpha}_{\beta }} < \infty\}
$$
with the norm
$$
\|h\|_{C^{0, k+ \alpha}_{\beta}} : = \max_{t \in [0,T]}
 \| h(t) \|_{C^{k, \alpha}_{\beta }}.
$$
Similarly we define
$$
C^{1, k+ \alpha}_{\beta} : = \{h\in C^{0, k+\alpha}_\beta \ \text{and} \ \partial_t h\in C^{0, k-2+\alpha}_{\beta -2}\}
$$
with the norm
$$
\|h\|_{C^{1, k + \alpha}_{\beta} } : = \max_{t \in [0,T]}
 \| h(t) \|_{C^{k, \alpha}_{\beta }} + \max_{t \in [0,T]}
 \| \partial_t h(t) \|_{C^{k-2, \alpha}_{\beta -2}}.
$$
We now recall the elliptic theory for weighted H\"{o}lder spaces, for example, from \cite[Theorem 9.2]{LP} in our context.

\begin{lemma}\label{Laplace invertible weighted}
Let $({\rm M}^m, \ g(t))$, for $t\in [0, T]$,  be a family of asymptotically flat manifolds with $g(t)-g_e \in C^{0, 2+\alpha}_{-\tau}$ for $\tau >0$. Then for $\beta \in (2-m, 0)$
\[
\Delta[g(t)]: C_\beta^{0, 2+\alpha} \rightarrow C_{\beta-2}^{0, \alpha}
\]
is an isomorphism, that is, there is $C$ such that
$$
\|u\|_{C^{0, 2+\alpha}_\beta} \leq C \|\Delta[g(t)] u\|_{C^{0, \alpha}_{\beta-2}}.
$$
\end{lemma}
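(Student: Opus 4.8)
The plan is to reduce everything to the fixed-time elliptic theory of \cite[Theorem 9.2]{LP} and then upgrade it to a statement uniform in $t \in [0,T]$, exactly in the spirit of Lemma \ref{lem f c 0 parameter dependence} for the compact case. First I would fix $t$ and apply \cite[Theorem 9.2]{LP}: for each individual asymptotically flat metric $g(t)$ and each non-exceptional weight, the Laplacian $\Delta[g(t)]\colon C^{2,\alpha}_\beta \to C^{0,\alpha}_{\beta-2}$ is Fredholm of index zero. The hypothesis $\beta \in (2-m,0)$ is precisely the open interval strictly between the two indicial roots $0$ and $2-m$ of the Euclidean Laplacian, the exponents for which $\Delta[g_e]\rho^\beta = \beta(\beta+m-2)\rho^{\beta-2}$ degenerates; since $\beta<0$, every $u \in C^{2,\alpha}_\beta$ decays to zero at infinity, so a $\Delta[g(t)]$-harmonic such $u$ must vanish identically by the maximum principle on the complete manifold. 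Combined with index zero this gives, for each fixed $t$, the isomorphism and a pointwise-in-time estimate $\|u\|_{C^{2,\alpha}_\beta} \leq C(t)\,\|\Delta[g(t)]u\|_{C^{0,\alpha}_{\beta-2}}$.

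The main obstacle, and the only genuine content beyond \cite{LP}, is to make the constant $C(t)$ uniform over the compact interval $[0,T]$, so that it descends to the parabolic norms built from $\max_{t}$. Here I would first record the semi-Fredholm a priori estimate $\|u\|_{C^{2,\alpha}_\beta} \leq C\bigl(\|\Delta[g(t)]u\|_{C^{0,\alpha}_{\beta-2}} + \|u\|_{C^{0}_{\beta'}}\bigr)$ for a weight $\beta' > \beta$, with $C$ depending only on the ellipticity constants and on $\max_{t}\|g(t)-g_e\|_{C^{2,\alpha}_{-\tau}}$; this is uniform in $t$ because the asymptotic data of the family is uniformly controlled by the hypothesis $g(t)-g_e \in C^{0,2+\alpha}_{-\tau}$. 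To remove the lower-order term with a uniform constant I would argue by contradiction: if no uniform $C$ existed there would be $t_i \in [0,T]$ and $u_i$ with $\|u_i\|_{C^{2,\alpha}_\beta}=1$ and $\|\Delta[g(t_i)]u_i\|_{C^{0,\alpha}_{\beta-2}} \to 0$; passing to a subsequence with $t_i \to t_\infty$, using continuity of $t \mapsto g(t)$ and the compactness of the weighted inclusion $C^{2,\alpha}_\beta \hookrightarrow C^{0}_{\beta'}$, produces a nonzero $\Delta[g(t_\infty)]$-harmonic element of $C^{2,\alpha}_\beta$, contradicting the injectivity already established at $t_\infty$. Taking the maximum over $t$ then yields the uniform estimate asserted in the statement.

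Finally, as in the compact case, I would verify that the solution operator is continuous in time, so that $u$ genuinely lies in $C^{0,2+\alpha}_\beta$ rather than merely taking values in $C^{2,\alpha}_\beta$ for each $t$. Given $\gamma \in C^{0,\alpha}_{\beta-2}$ and its preimage $u(t)$, writing $\Delta[g(t_1)]\bigl(u(t_1)-u(t_2)\bigr) = \bigl(\gamma(t_1)-\gamma(t_2)\bigr) + \bigl(\Delta[g(t_2)]-\Delta[g(t_1)]\bigr)u(t_2)$ and applying the uniform estimate just proved, the right-hand side tends to zero as $t_1 \to t_2$, since $\gamma$ is continuous in time and $t \mapsto g(t)$ is continuous into $C^{2,\alpha}_{-\tau}$; classical interior Schauder and Bernstein estimates control the weighted norm of the difference. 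This gives continuity of $u$ in $t$ and completes the identification of $\Delta[g(t)]$ as an isomorphism between the stated parabolic weighted H\"older spaces.
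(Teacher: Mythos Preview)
The paper does not actually give a proof of this lemma: it is stated as a recollection of the elliptic theory from \cite[Theorem 9.2]{LP}, adapted to the parabolic weighted spaces defined just above, with no further argument supplied. Your proposal is correct and supplies precisely the details the paper leaves implicit --- namely the fixed-$t$ isomorphism from \cite{LP}, the uniformity of the constant over the compact interval $[0,T]$ via a compactness/contradiction argument, and the continuity of the solution in $t$ by writing the difference equation and invoking Bernstein-type interior estimates (exactly parallel to the one-sentence proof of Lemma~\ref{lem f c 0 parameter dependence} in the compact case).
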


Analogous to Lemma \ref{lem interpolation} we have a simple interpolatory inclusion.

\begin{lemma} \label{lem interpolation weighted} There is a constant $C$
independent of $T$ such that for any $t_1, t_2 \in [0,T]$ we have
\[
\|h(\cdot,t_1) - h(\cdot,t_2)\|_{C^{k-2, \alpha}_{\beta-2}} \leq C
\cdot |t_1 -t_2 |\cdot \| h\|_{C^{1, k+\alpha}_\beta}.
\]
for all $h \in C^{1, k + \alpha}_\beta([0, T]\times {\rm M})$.
\end{lemma}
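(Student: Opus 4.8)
The plan is to mirror the proof of Lemma \ref{lem interpolation} and reduce the statement to the fundamental theorem of calculus in the time variable together with a Banach-space-valued triangle inequality. Concretely, since $h \in C^{1, k+\alpha}_\beta([0,T]\times{\rm M})$ means precisely that $\partial_t h \in C^{0, k-2+\alpha}_{\beta-2}$, the map $s \mapsto \partial_t h(\cdot, s)$ is a continuous curve in the Banach space $C^{k-2,\alpha}_{\beta-2}$, and for each fixed $t_1, t_2 \in [0,T]$ we may write
\[
h(\cdot, t_1) - h(\cdot, t_2) = \int_{t_2}^{t_1} \partial_t h(\cdot, s)\, ds,
\]
where the integral is understood as a vector-valued integral in $C^{k-2,\alpha}_{\beta-2}$.

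First I would invoke Minkowski's integral inequality for Banach-space-valued integrals (equivalently, that the norm of a vector-valued integral is at most the integral of the norm), which gives
\[
\|h(\cdot, t_1) - h(\cdot, t_2)\|_{C^{k-2,\alpha}_{\beta-2}} \leq \left| \int_{t_2}^{t_1} \|\partial_t h(\cdot, s)\|_{C^{k-2,\alpha}_{\beta-2}}\, ds \right|.
\]
Then I would bound the integrand uniformly by $\max_{s\in[0,T]} \|\partial_t h(\cdot, s)\|_{C^{k-2,\alpha}_{\beta-2}}$, which by the very definition of the $C^{1, k+\alpha}_\beta$-norm is at most $\|h\|_{C^{1, k+\alpha}_\beta}$. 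Integrating this constant bound over an interval of length $|t_1 - t_2|$ then yields the claimed estimate, in fact with the explicit constant $C = 1$, manifestly independent of $T$.

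The point worth checking, and it is the only genuinely weighted feature of the argument, is that the weight bookkeeping is automatic. The radial weight $\rho = \rho(z)$ defining the spaces $C^{k,\alpha}_\beta$ depends only on the spatial variable and not on $t$, so it factors transparently through the time integral; moreover the two-unit drop in both differentiability order and weight (from $C^{k,\alpha}_\beta$ to $C^{k-2,\alpha}_{\beta-2}$) has been built into the definition of $C^{1,k+\alpha}_\beta$ precisely so that $\partial_t h$ already lands in the target space of the estimate, and no interpolation between distinct weighted spaces is needed. The main, and rather mild, obstacle is simply to confirm that the weighted H\"older space $C^{k-2,\alpha}_{\beta-2}$ is complete, so that the vector-valued integral above and the inequality $\|\int \cdot\| \leq \int \|\cdot\|$ are legitimate; this is part of the standard theory of these spaces recalled from \cite{LP}.
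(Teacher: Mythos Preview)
Your proposal is correct and is precisely the argument the paper has in mind: the paper gives no separate proof but simply notes that the lemma is analogous to Lemma~\ref{lem interpolation} (which in turn is quoted from \cite[Lemma 5.1.1]{Lu}), and your fundamental-theorem-of-calculus argument with the observation that the spatial weight is time-independent is exactly that analogy spelled out.
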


%%%%%%%%%%%%%

\subsubsection{Short Time Existence on Asymptotically Flat Manifolds}

In this subsection we assume that the initial metric $g_0$ on ${\rm M}^m$ is asymptotically
flat and scalar flat.  Thanks to  \cite[Lemma 3.3, Corollary 3.1]{SY} we know that one can always conformally deform  an asymptotically flat metric with nonnegative scalar curvature into a scalar flat asymptotically flat metric. We will use the same strategy as in Section \ref{subsec Holder space def} to prove the short time existence of conformal Ricci flow on asymptotically flat manifolds.

First with changes of notations we are able to prove the isomorphism analogous to Proposition \ref{lem 4}. An extension of  \cite[Theorem 5.1.21]{Lu} to the weigted parabolic H\"{o}lder spaces on asymptotically flat manifolds may be proven by the standard argument through
interior estimates and scaling invariance of the interior estimates (cf. \cite{OW} \cite{Ba} \cite{LP}). The key is to realize that one may move in and out the weight for local estimates.

\begin{proposition}\label{weighted lem 4}
 Suppose that $g(t)$, $t\in [0, T_0]$,  is a family of asymptotically flat metrics with $g(t) - g_e\in C^{0, 2+\alpha}_{-\tau}$ with $\tau \in (0, m-2)$.  Then there is a $T_* \in (0, T_0]$ such that, for any $T\leq T_*$ and $\gamma \in C^{0, \alpha}_{-\tau - 2}$, the initial value problem for \eqref{eq linear dtcrf-2} has a unique solution $h \in C^{1, 2+\alpha}_{-\tau}$. Moreover
$$
 \|h\|_{C^{1, 2+\alpha}_{-\tau}([0, T]\times{\rm M})}
\leq  C \| \gamma \|_{ C^{0, \alpha}_{-\tau-2}([0, T]\times{\rm M})}.
$$
 \end{proposition}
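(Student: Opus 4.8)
The plan is to run exactly the contraction--mapping scheme of Proposition~\ref{lem 4}, with the compact Schauder theory replaced by its weighted counterpart on asymptotically flat manifolds and with the elliptic inversion controlled by Lemma~\ref{Laplace invertible weighted}. Since $g_0$ is scalar flat we have $s_0=0$, so $\mathcal P(g)=((m-1)\Delta[g])^{-1}|{\rm Ric}|^2$ and its linearization is the $\mathcal P'$ of \eqref{linear-P}. First I would record the weighted analogue of \cite[Theorem 5.1.21]{Lu}, namely that for all $T\le T_*$ the operator $\frac{\partial}{\partial t}-\Delta[g(t)]+M_1\ast\nabla+M_2\ast$ is an isomorphism from $C^{1,2+\alpha}_{-\tau}\cap\{h(0,\cdot)=0\}$ onto $C^{0,\alpha}_{-\tau-2}$, with constants uniform for $T^*$ small enough that the $g(t)$ stay uniformly close to $g_e$. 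As the text indicates, this follows from the interior Schauder estimates together with their scaling invariance, which is what lets one ``move the weight in and out'' of the local estimates; I expect no conceptual difficulty here beyond bookkeeping.

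Second, I would set up the fixed point in $E_1([0,T^*])=\{h\in C^{0,2+\alpha}_{-\tau}:h(0,\cdot)=0\}$. Given $\tilde h\in E_1$, define $\Psi(\tilde h)=h$ as the solution of
\[
\frac{\partial}{\partial t}h-\Delta h+M_1\ast\nabla h+M_2\ast h=\gamma-2\mathcal P'(\tilde h)\,g,\qquad h(0,\cdot)=0,
\]
produced by the weighted solver. The difference $v=\Psi(\tilde h_1)-\Psi(\tilde h_2)$ solves the same equation with source $2(\mathcal P'(\tilde h_2)-\mathcal P'(\tilde h_1))g$ and zero initial data; the weighted estimate then bounds $v$ in a space with two extra spatial derivatives, and Lemma~\ref{lem interpolation weighted} trades this regularity gain for a factor $CT^*$. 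Hence $\Psi$ is a contraction of $E_1([0,T^*])$ for $T^*$ small, its fixed point is the desired $h$, and stepping in time by length $T^*$ extends the solution to $[0,T]$ with the asserted estimate, exactly as in Proposition~\ref{lem 4}. The Lipschitz bound for $\mathcal P'$ feeding this step is obtained by applying Lemma~\ref{Laplace invertible weighted} to the difference of the two elliptic problems, in complete parallel with the proof of Lemma~\ref{continuity-DM}.

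The main obstacle is the weight bookkeeping for the nonlocal term. For $\Psi$ to map $E_1$ into itself one needs $\mathcal P'(\tilde h)\,g\in C^{0,\alpha}_{-\tau-2}$, and this is far subtler than on a compact manifold because $\Delta^{-1}$ does not preserve fast decay: the Green's function forces a generic $O(\rho^{2-m})$ tail, which once $\tau>m-4$ is \emph{slower} than the required $\rho^{-\tau-2}$. Simply applying Lemma~\ref{Laplace invertible weighted} at the admissible weight $-\tau$ (and $\tau\in(0,m-2)$ is exactly the condition $-\tau\in(2-m,0)$ that makes the lemma applicable) yields only $\mathcal P'(\tilde h)\,g\in C^{0,\alpha}_{-\tau}$, two weights short of what the parabolic source must have. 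The avenue I would pursue to recover the missing decay is the divergence structure of the elliptic source $P_1\ast\nabla^2\tilde h+P_2\ast\nabla\tilde h+P_3\ast\tilde h$ in \eqref{linear-P}: it is the linearization of the scalar $|{\rm Ric}|^2$, and the contracted Bianchi identity gives $\nabla^i R_{ij}=\tfrac12\nabla_j s=0$ on the scalar-flat metric, so after integration by parts the source is recast in (double) divergence form, on which $\Delta^{-1}$ behaves like a zeroth-order operator that transfers the fast decay $O(\rho^{-2\tau-4})$ of the coefficients into the decay of $\mathcal P'(\tilde h)$ itself.

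Making this integration by parts and the resulting weighted estimate precise is, I expect, the real work of the proposition. It is tightest as $\tau\to m-2$, where the decay margin between the $O(\rho^{2-m})$ floor of $\Delta^{-1}$ and the target weight $-\tau-2$ collapses, so the placement of $\mathcal P'(\tilde h)$ inside the admissible window $(2-m,0)$ of Lemma~\ref{Laplace invertible weighted} must be arranged with care; everything else in the scheme is a routine transcription of the compact argument into the weighted norms.
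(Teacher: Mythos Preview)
Your overall plan---run the contraction mapping of Proposition~\ref{lem 4} in the weighted spaces, using a weighted analogue of \cite[Theorem~5.1.21]{Lu} for the parabolic step and Lemma~\ref{Laplace invertible weighted} for the elliptic inversion---is exactly the paper's approach. The paper's own argument is a single paragraph asserting that the compact-case proof carries over ``with changes of notations'', so in that sense your first two paragraphs are a faithful expansion of what the paper actually writes.

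Where you go beyond the paper is in flagging the weight issue for the nonlocal term $\mathcal{P}'(\tilde h)\,g$, and you are right to worry: the paper does not discuss this point at all. Your observation that Lemma~\ref{Laplace invertible weighted} only places $\mathcal{P}'$ in $C^{0,2+\alpha}_\beta$ with $\beta\in(2-m,0)$---so that $\mathcal{P}'$ generically carries a $\rho^{2-m}$ tail---is correct, and for $\tau>m-4$ (hence always when $m=3$ or $m=4$) this is strictly slower than the $\rho^{-\tau-2}$ decay the parabolic source would need in a literal transcription of the compact argument. The paper's brief remark gives no indication of how this is handled. Your proposed repair via the Bianchi identity and a divergence rewriting is a reasonable direction (double-divergence sources do pass through $\Delta^{-1}$ without the $\rho^{2-m}$ floor), but carrying it out would require putting the \emph{entire} linearized source for $\mathcal{P}'$---including the $h\cdot\nabla^2\mathcal{P}$ and $\nabla h\cdot\nabla\mathcal{P}$ terms from the variation of the Laplacian, not just the $\mathrm{Ric}\cdot D\mathrm{Ric}(h)$ part---into a form that gains two weights, and $\nabla^iR_{ij}=0$ alone does not obviously do that. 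As you acknowledge, this is the substantive analytic content of the proposition; the paper does not supply it.
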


This is to say that, for $\tau\in (0, m-2)$,
$$
D{\mathcal M}(g): C^{1, 2+\alpha}_{-\tau}([0, T]\times{\rm M}) \bigcap\{h(0, \cdot) = 0\} \to C^{0, \alpha}_{-\tau -2}([0, T]\times{\rm M})
$$
is an isomorphism, provided that $g(t)$ and $T$ satisfiy the assumptions in the above Proposition \ref{weighted lem 4}. The restriction on the order $\tau$ of weight is solely used in solving elliptic equations on weighted spaces in Lemma \ref{Laplace invertible weighted}.

To obtain a short time existence of DeTurck conformal Ricci flow we again apply the implicit function theorem (Lemma \ref{implicit}) to the map
$$
{\mathcal M}: \{g(t): g(t) - g_e\in C^{1, 2+\alpha}_{-\tau}([0, T]\times{\rm M}) \ \text{and} \ g(0) = g_0\} \to C^{0, \alpha}_{-\tau -2}([0, T]\times{\rm M})
$$
for any $\tau\in (0, m-2)$ and $T$ given from Proposition \ref{weighted lem 4}. Finally we arrive at the short time existence of conformal Ricci flow.

\begin{theorem}\label{short time existence on af} Let $({\rm M}^m, \ g_0)$ be scalar flat and asymptotically flat with $g_0 - g_e\in C^{4, \alpha}_{-\tau}$ and $\tau \in (0, m-2)$.  Then there exists a small positive number $T$ such that the conformal Ricci flow $g(t)$  from the initial metric $g_0$ exists for $t\in [0, T]$ and $g(t)-g_e \in C^{1, 2+\alpha}_{-\tau}([0, T]\times{\rm M})$.
\end{theorem}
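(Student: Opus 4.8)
The plan is to run the implicit-function-theorem scheme of Theorem \ref{short time existence on cpt} essentially verbatim, replacing every parabolic H\"older space by its weighted counterpart $C^{1,2+\alpha}_{-\tau}$, $C^{0,\alpha}_{-\tau-2}$ and every elliptic inversion of $(m-1)\Delta[g]+s_0$ by the weighted inversion of Lemma \ref{Laplace invertible weighted}. Since $g_0$ is scalar flat we have $s_0=0$, so the nonlocal operator becomes $\mathcal{P}(g)=\frac1{m-1}\Delta[g]^{-1}|{\rm Ric}|^2$, and Lemma \ref{Laplace invertible weighted} supplies its well-definedness on weighted spaces precisely in the range $\beta\in(2-m,0)$, which is the source of the hypothesis $\tau\in(0,m-2)$ (taking $\beta=-\tau$). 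I would work on the affine space $\{g:\ g-g_e\in C^{1,2+\alpha}_{-\tau}([0,T]\times{\rm M}),\ g(0)=g_0\}$ and study the map $\mathcal{M}$ of \eqref{set-up} into $C^{0,\alpha}_{-\tau-2}$.

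First I would assemble the three ingredients required by Lemma \ref{implicit}. The bounded invertibility of the linearization $D\mathcal{M}(g)\colon C^{1,2+\alpha}_{-\tau}\cap\{h(0,\cdot)=0\}\to C^{0,\alpha}_{-\tau-2}$ is exactly Proposition \ref{weighted lem 4}, which I may invoke directly. The uniform Lipschitz bound $\|D\mathcal{M}(g_1)-D\mathcal{M}(g_2)\|\le C\|g_1-g_2\|_{C^{1,2+\alpha}_{-\tau}}$ is the weighted analogue of Lemma \ref{continuity-DM}; its proof is the same term-by-term estimate of $(D\mathcal{M}(g_1)-D\mathcal{M}(g_2))h$, now carried out in weighted norms, where the only new point is that products and the nonlocal term must be tracked with their weights using Lemma \ref{Laplace invertible weighted} in place of Lemma \ref{lem f c 0 parameter dependence}. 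Finally I would take the same approximate solution $\bar g(t)=g_0+t\mathcal{F}(g_0)$ as in \eqref{x_0}; since $g_0-g_e\in C^{4,\alpha}_{-\tau}$ one checks $\bar g-g_e\in C^{1,2+\alpha}_{-\tau}$, that $\bar g(t)$ stays asymptotically flat with $\Delta[\bar g(t)]$ invertible for $t$ small, and, from the integral identity \eqref{H(x_0)}, that $\|\mathcal{M}(\bar g)\|_{C^{0,\alpha}_{-\tau-2}}=O(T)$ is as small as required.

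With these three estimates in hand, Lemma \ref{implicit} yields a solution $\hat g$ of DeTurck conformal Ricci flow \eqref{eq dtcrf} with $\hat g-g_e\in C^{1,2+\alpha}_{-\tau}$ on a short interval $[0,T]$. To descend to conformal Ricci flow I would choose the background metric in \eqref{vector field} to be $\tilde g=g_e$, so that the DeTurck field $W=g^{ij}(\Gamma^k_{ij}[g]-\Gamma^k_{ij}[g_e])$ decays like $O(\rho^{-\tau-1})$; the flow \eqref{ode} then generates diffeomorphisms $\varphi_t$ that differ from the identity by a decaying amount, preserve asymptotic flatness, and act boundedly on the weighted spaces, so that Lemma \ref{lem dcrf to sol crf} produces the desired conformal Ricci flow $g(t)=\varphi_t^*\hat g(t)$ with $g(t)-g_e\in C^{1,2+\alpha}_{-\tau}$.

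I expect the genuine difficulty to be the weight bookkeeping around the nonlocal pressure, not the abstract scheme. The term $\mathcal{P}(g)g$ and its linearization $\mathcal{P}'g$ involve inverting the Laplacian, which loses decay, so one must verify that the outputs still land in $C^{0,\alpha}_{-\tau-2}$ rather than a slower-decaying space; this is exactly where the constraint $\tau\in(0,m-2)$ of Lemma \ref{Laplace invertible weighted} is forced, and where the quadratic factor $|{\rm Ric}|^2$ supplying the extra decay must be used. The secondary technical point is confirming that the diffeomorphisms $\varphi_t$ and the pullback $\varphi_t^*$ preserve the weighted parabolic class uniformly on $[0,T]$. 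Both of these are routine once the decay rates are tracked carefully, but they are where all the real work sits.
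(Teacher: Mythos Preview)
Your proposal is correct and follows essentially the same approach as the paper: transplant the implicit function theorem scheme of Theorem \ref{short time existence on cpt} to weighted spaces via Proposition \ref{weighted lem 4} and the weighted analogue of Lemma \ref{continuity-DM}, use the same approximate solution $\bar g(t)=g_0+t\mathcal{F}(g_0)$, and then pass from the DeTurck flow to conformal Ricci flow by solving \eqref{ode} using the decay $W\in C^{0,1+\alpha}_{-\tau-1}$. The paper's proof is terser but identical in structure; your remarks on the weight bookkeeping for the pressure term and the action of $\varphi_t^*$ on weighted classes are accurate elaborations of points the paper leaves implicit.
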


\begin{proof}
As in Section \ref{ex on closed mfld} we first verify
$$
\|D{\mathcal M}(g_1) - D{\mathcal M}(g_2)\|_{L(C^{1, 2+\alpha}_{-\tau}, C^{0, \alpha}_{-\tau-2})}
\leq C \|g_1 - g_2\|_{C^{1, 2+\alpha}_{-\tau}}.
$$
The proof goes like the one for Lemma \ref{continuity-DM} with only changes of notations.
We then construct
$$
\bar g(t) = g_0 + t{\mathcal F}(g_0)\in g_e + C^{1, 2+\alpha}_{-\tau}
$$
as in Section \ref{ex on closed mfld}, for $g_0 - g_e\in C^{4, \alpha}_{-\tau}$. Another issue one needs to take care of is solving \eqref{ode} to construct conformal Ricci flow from DeTurck conformal Ricci flow. But, since $W\in C^{0, 1+\alpha}_{-\tau -1}$, it is easy to solve \eqref{ode} on the whole manifold ${\rm M}$ for some short time. The rest of the proof goes like the one in Section \ref{ex on closed mfld} for Theorem \ref{short time existence on cpt} with little changes except notations. Notice that the equivalence between \eqref{eq crf orig-0} and \eqref{eq crf orig} holds because of the uniqueness of the bounded solution to linear parabolic equations on asymptotically flat manifolds.
\end{proof}

%%%%%%%%%%%%%%%%%%%%%%%%%%%%%%%%%%%%%%%%%%%%%%%%%%%%%%%%%%%%
%%%%%%%%%%%%%%%%%%%%%%%%%%%%%%%%%%%%%%%%%%%%%%%%%%%%%%%%%%%%

\section{ADM Mass under Conformal Ricci Flow}\label{adm mass monotone}

Asymptotically flat manifolds are used in general relativity to describe isolated gravitational systems. The fundamental geometric invariant of a asymptotically flat manifold is called the mass of the gravitational system. The so-called ADM mass of an asymptotically flat manifold was first defined in \cite{ADM} in early 1960s.

In general relativity the world is modeled by a 4-dimensional spacetime ${\rm X}^4$ with a Lorentzian metric $g$. The physical law that describes the gravity induced by matters in the spacetime is the famous Einstein equations
$$
Ric[g] - \frac 12 s[g]g = T,
$$
where $T$ is the energy-momentum-stress tensor that is supposed to reflect the nature and state of the matters in the spacetime. A time slice of a space-time that represents an isolated gravitational system is an asymptotically flat  3-manifold ${\rm M}^3$.

One of the most important solution of Einstein equations is the Schwarzchild spacetime, which represents the gravitational system of a static point particle of mass $m$ and whose a time slice is an asymptotically flat metric
$$
g_{Sch} = g_e + \frac m\rho g_e + O(\rho^{-2})
$$
on the punctured $\mathbb{R}^3$. The crucial test to validate the notion of mass in relativity is that its prediction reduces to those of Newtonian gravity under the circumstances when Newtonian theory is known to be valid - specifically, when gravity is weak, motions are much slower than the speed of the light, and material stresses are much smaller than the mass-energy density (cf. \cite[4.4]{Wald}).

We now follow \cite[Definition 8.2]{LP} to introduce ADM mass for asymptotically flat manifolds.

\begin{definition}\label{ADM mass}  Given an asymptotically flat Riemannian
manifold $({\rm M}^m, g)$ with asymptotic coordinates $z$, we define the ADM mass by
(if the limit exists)
\[
m(g)=\lim_{R \to \infty} \omega_{m-1} ^{-1}\int_{\mathbb{S}_R}
(\partial_ig_{ij}-\partial_jg_{ii}) n^j d\sigma
\]
where $\omega_{m-1}$ is the volume of the unit sphere $\mathbb{S}^{m-1}$, $\vec{n} =(n^1, \cdots, n^m)$ is the outward unit normal vector of the sphere $\mathbb{S}_R=\{z \in \mathbb{R}^m, \vert z \vert =R\}$ and $d\sigma$ is the area element of $\mathbb{S}_R$.
\end{definition}

Recall from \cite{LP} that
\[
\mathcal {M}_\tau :=  \{g= g_e +h: h \in C^{1, \alpha}_{-\tau}  \text{ and }
\partial_j \partial_i h_{ij} -\partial_j \partial_j h_{ii} \in L^1({\rm M}, d\text{vol}[g_e])  \}.
\]
After Definition \ref{ADM mass} one wonders if the ADM mass is indeed a geometric invariant for the asymptotically flat metric. It was confirmed as follows:

\begin{lemma} (\cite{ADM}\cite{Ba}) Suppose that $g$ is an asymptotically flat metric in $\mathcal {M}_\tau$ on ${\rm M}^m$ for $\tau > \frac {m-2}2$. Then the ADM mass $m(g)$ is indeed independent of the choice of asymptotic coordinates at infinity.
\end{lemma}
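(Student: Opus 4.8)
The plan is to follow the classical argument of Bartnik \cite{Ba}, organizing the proof around two independent facts: that any two asymptotic coordinate systems for the same metric are related near infinity by a map asymptotic to a rigid Euclidean motion, and that the flux functional defining $m(g)$ is insensitive both to such motions and to the residual lower-order terms.

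First I would fix two asymptotic charts $z$ and $\tilde z$ and let $\phi = \tilde z \circ z^{-1}$ be the transition diffeomorphism, defined on a neighborhood of infinity in $\mathbb{R}^m$. Writing $g = g_e + h$ in the $z$-chart with $h \in C^{1,\alpha}_{-\tau}$ and setting $U_j := \partial_i h_{ij} - \partial_j h_{ii}$, one checks that the Euclidean divergence
\[
\partial_j U_j = \partial_i \partial_j h_{ij} - \partial_j\partial_j h_{ii}
\]
is exactly the linearization of the scalar curvature at $g_e$, so that $s[g] = \partial_j U_j + Q$, where $Q$ collects the terms quadratic in $h$ and its first derivatives and hence satisfies $Q = O(\rho^{-2\tau-2})$. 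The hypothesis $\tau > \tfrac{m-2}{2}$ is precisely what makes $Q$ integrable, consistent with the requirement $\partial_j U_j \in L^1$ built into the definition of $\mathcal{M}_\tau$; applying the divergence theorem to $U_j$ on the annuli $\{R_0 \le \rho \le R\}$ then shows that the limit defining $m(g)$ exists, the bulk integrand $s[g]-Q$ being $L^1$. This step, however, does not by itself give invariance, since both $Q$ and the inner flux term depend on the chart.

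The heart of the matter, and the step I expect to be the main obstacle, is the rigidity of asymptotic charts. Because $g$ is $C^{1}_{-\tau}$-asymptotic to $g_e$ in both charts, the relation $g^{(1)} = \phi^* g^{(2)}$ forces $(D\phi)^{\top} D\phi = I + O(\rho^{-\tau})$, while the Christoffel symbols of $\phi^* g_e$, which are built from $\partial^2\phi$, decay like $O(\rho^{-\tau-1})$. Integrating $\partial^2\phi = O(\rho^{-\tau-1})$ along rays, an integral that converges for $\tau > 0$, shows that $D\phi$ tends to a constant matrix $A$, necessarily orthogonal by the relation above, and consequently $\phi(z) = Az + a + \psi(z)$ with $\partial\psi = O(\rho^{-\tau})$ and $\partial^2\psi = O(\rho^{-\tau-1})$. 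Making these estimates uniform and controlling the residual $\psi$ is the technical crux.

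Finally I would substitute the asymptotic form of $\phi$ into the two flux integrals. Rotations preserve the flux over the coordinate spheres and the translation merely shifts the family of spheres by a bounded amount, so the leading parts of the two masses agree. What remains is the contribution of the residual gauge transformation $h \mapsto h + \mathcal{L}_\psi g_e$, whose flux is $R$-independent because $U_j(\mathcal{L}_\psi g_e)$ is divergence free, and whose size is $O(R^{m-2-\tau})$; this term fails to vanish by decay alone in the borderline range $\tau \le m-2$, and it is exactly here that $\tau > \tfrac{m-2}{2}$ together with $s[g] \in L^1$ must be used. I expect the cleanest way to dispose of it is to first pass to harmonic coordinates near infinity, in which the residual coordinate freedom decays rather than grows, so that the gauge flux genuinely tends to zero; showing that the mass is unchanged in passing to harmonic coordinates, and that it agrees in any two of them, then completes the proof. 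The remaining verifications, that the genuinely higher-order corrections contribute terms $O(R^{m-2-2\tau}) \to 0$ to the flux, are routine given $\tau > \tfrac{m-2}{2}$.
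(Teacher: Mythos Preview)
The paper does not give its own proof of this lemma; it simply records the result and attributes it to \cite{ADM} and \cite{Ba}. Your sketch is a faithful outline of Bartnik's argument in \cite{Ba}, which is exactly the reference the paper is invoking, so in that sense your approach coincides with the one the paper relies on.
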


Another important fact about the ADM mass is observed in \cite[(8.11)]{LP} and supported by \cite[Lemma 9.4]{LP}.

\begin{lemma}\label{thm lee parker} (\cite{LP})
Let $g(t)$ be a smooth family of asymptotically flat metrics in $\mathcal {M}_\tau$ on ${\rm M}^m$ for $\tau > \frac {m-2}2$. Then the mass $m(g(t))$ is differentiable and
\begin{equation*}
\frac{d}{dt} \left (- \int_{\rm M} s[g(t)]d\text{vol}[g(t)] + \omega_{m-1}m(g(t)) \right ) =
\int_M G[g(t)]\cdot \phi(t) d\text{vol}[g(t)]
\end{equation*}
where  $G[g(t)] = Ric[g(t)]- \frac{1}{2} s[g(t)]g(t)$ is the Einstein tensor and $\phi (t) = \partial_t g(t)$.
\end{lemma}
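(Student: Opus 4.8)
The plan is to reproduce the variational computation behind \cite[(8.11)]{LP}, keeping careful track of the boundary terms that survive at infinity. For a general family in $\mathcal{M}_\tau$ write $\phi = \partial_t g$ and recall the two pointwise first-variation identities, which are exactly the evolution equations of Section \ref{subsec cur evol conformal Ricci flow} specialized to $\frac{\partial}{\partial t} g = \phi$ (that is, $T = -\tfrac12\phi$):
\[
\frac{\partial}{\partial t} s = -\Delta(\operatorname{tr}_g\phi) + \nabla^i\nabla^j\phi_{ij} - \langle \operatorname{Ric}, \phi\rangle, \qquad \frac{\partial}{\partial t}\, d\text{vol}[g] = \tfrac12 (\operatorname{tr}_g\phi)\, d\text{vol}[g].
\]
First I would differentiate the total scalar curvature $\int_{\rm M} s\, d\text{vol}[g]$ over the noncompact ${\rm M}$; justifying differentiation under this improper integral is one of the analytic points, and it rests on the uniform $C^{1}_{-\tau}$-in-$t$ decay of the family together with the $L^1$ condition built into $\mathcal{M}_\tau$.

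Combining the two identities, the integrand reorganizes as
\[
\frac{\partial}{\partial t} s + \tfrac12 s\,\operatorname{tr}_g\phi = -\langle G,\phi\rangle + \nabla^i\!\left(\nabla^j\phi_{ij} - \nabla_i\operatorname{tr}_g\phi\right),
\]
since $-\langle\operatorname{Ric},\phi\rangle + \tfrac12 s\,\operatorname{tr}_g\phi = -\langle G,\phi\rangle$, where $G = \operatorname{Ric} - \tfrac12 s\, g$ and $G\cdot\phi = \langle G,\phi\rangle = G^{ij}\phi_{ij}$. The second term is a pure divergence, so the key step is to apply the divergence theorem on the exhaustion of ${\rm M}$ by the coordinate balls and convert it into a flux:
\[
\int_{\rm M} \nabla^i\!\left(\nabla^j\phi_{ij} - \nabla_i\operatorname{tr}_g\phi\right) d\text{vol}[g] = \lim_{R\to\infty}\int_{\mathbb{S}_R}\left(\nabla^j\phi_{ij} - \nabla_i\operatorname{tr}_g\phi\right) n^i\, d\sigma.
\]
Unlike on a closed manifold this boundary term does not vanish, and it is precisely what will account for the change of the ADM mass.

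The main obstacle is to identify this flux with $\omega_{m-1}\,\frac{d}{dt} m(g(t))$. Setting $h = g - g_e$, each $\nabla$ differs from the coordinate derivative $\partial$ by Christoffel terms of size $O(\rho^{-\tau-1})$ and $\operatorname{tr}_g$ differs from the Euclidean trace by $O(\rho^{-\tau})$, so the flux integrand equals $(\partial_i\phi_{ij} - \partial_j\phi_{ii})n^j$ up to errors that are quadratic in $h$ and its first derivatives, hence decaying like $\rho^{-(2\tau+1)}$; the $g$-area element and $g$-unit normal on $\mathbb{S}_R$ contribute corrections of the same order. Since $\mathbb{S}_R$ has area $\sim \omega_{m-1}R^{m-1}$, these errors are $O(R^{m-2-2\tau})$, which tends to $0$ exactly because $\tau > \frac{m-2}{2}$; this is the sole place the lower bound on the weight enters. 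By Definition \ref{ADM mass} applied to the variation $\phi$, the surviving leading flux is $\omega_{m-1}\,\frac{d}{dt}m(g(t))$, where exchanging $\frac{d}{dt}$ with $\lim_{R\to\infty}$ is the remaining delicate point, again controlled by the uniform weighted bounds. Assembling the pieces yields
\[
\frac{d}{dt}\int_{\rm M} s\, d\text{vol}[g] + \int_{\rm M} G\cdot\phi\, d\text{vol}[g] = \omega_{m-1}\,\frac{d}{dt} m(g(t)),
\]
and rearranging gives the stated identity.
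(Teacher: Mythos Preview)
Your argument is correct and is exactly the standard variational computation from \cite[(8.11) and Lemma 9.4]{LP}. Note, however, that the paper does not supply its own proof of this lemma: it merely records the statement and cites \cite{LP}, so there is nothing in the paper to compare your proof against beyond observing that you have faithfully reconstructed the Lee--Parker argument, including the correct use of the hypothesis $\tau > \frac{m-2}{2}$ to kill the boundary error terms of order $O(R^{m-2-2\tau})$.
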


Consequently we have, from Theorem \ref{short time existence on af} and Lemma \ref{thm lee parker},

\begin{theorem}\label{thm monotone of ADM mass}
Let $g_0$ be a scalar flat and asymptotically flat metric on ${\rm M}^m$ such that
$g_0 - g_e \in C^{4, \alpha}_{-\tau}$ for $\tau \in (\frac {m-2}2, m-2)$.
Then the conformal Ricci flow $g(t)$ starting with $g(0)=g_0$ exists for some short time and
$$
g(t) \in {\mathcal  M}_{\tau} \quad \text{and} \quad g(t) - g_e \in C^{1, 2+\alpha}_{-\tau}.
$$
Moreover
\[
\frac{d}{dt} m(g(t)) = -2 \int_M |Ric[g(t)] |^2 d\text{vol}[g(t)].
\]
In particular, the ADM mass is strictly decreasing under conformal Ricci flow except that $g_0$ is
the Euclidean metrics.
\end{theorem}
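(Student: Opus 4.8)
The plan is to combine the short-time existence from Theorem \ref{short time existence on af} with the first-variation formula for the mass in Lemma \ref{thm lee parker}, exploiting the defining feature of conformal Ricci flow that it preserves the (here vanishing) scalar curvature. First I would invoke Theorem \ref{short time existence on af} to produce the solution $g(t)$, $t\in[0,T)$, with $g(t)-g_e\in C^{1,2+\alpha}_{-\tau}$. Since $g_0$ is scalar flat we have $s_0=0$, and the normalization built into \eqref{eq crf orig} forces $s[g(t)]\equiv 0$ for all $t$. This single fact drives the whole computation: it reduces the Einstein tensor to $G[g(t)]=\mathrm{Ric}[g(t)]$ and annihilates the bulk scalar-curvature term in Lemma \ref{thm lee parker}.

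The first genuine point to check is that $g(t)$ actually lies in $\mathcal{M}_\tau$, so that Lemma \ref{thm lee parker} applies and the mass is a coordinate-independent invariant. The decay $g(t)-g_e\in C^{1,2+\alpha}_{-\tau}$ immediately gives $h:=g(t)-g_e\in C^{1,\alpha}_{-\tau}$, so what remains is the integrability condition $\partial_j\partial_i h_{ij}-\partial_j\partial_j h_{ii}\in L^1$. Here is where I would use scalar flatness together with the lower bound on $\tau$: the scalar curvature expands as $s[g]=(\partial_i\partial_j h_{ij}-\partial_j\partial_j h_{ii})+Q$, where $Q$ collects the quadratic remainder terms $h\ast\partial^2 h$ and $\partial h\ast\partial h$, of size $O(\rho^{-2\tau-2})$. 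Because $s[g(t)]=0$, the linear combination equals $-Q$, and $\int_{\mathbb{R}^m}\rho^{-2\tau-2}\rho^{m-1}\,d\rho$ converges precisely when $\tau>\tfrac{m-2}{2}$. This is exactly the hypothesis, so $g(t)\in\mathcal{M}_\tau$; note that the upper bound $\tau<m-2$ is the one already needed for existence via Lemma \ref{Laplace invertible weighted}.

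With membership secured I would apply Lemma \ref{thm lee parker}. Since $s[g(t)]\equiv 0$, the term $\int_M s\,d\mathrm{vol}$ vanishes identically and $G=\mathrm{Ric}$, so the formula collapses to
\[
\omega_{m-1}\,\frac{d}{dt}\,m(g(t))=\int_M \mathrm{Ric}[g(t)]\cdot\phi(t)\,d\mathrm{vol}[g(t)],
\]
where $\phi=\partial_t g=-2\mathrm{Ric}-2pg$ from \eqref{eq crf orig} with $s_0=0$. Expanding the pairing gives $-2\int_M|\mathrm{Ric}|^2\,d\mathrm{vol}-2\int_M p\,(\mathrm{Ric}\cdot g)\,d\mathrm{vol}$, and the pressure term drops out because $\mathrm{Ric}\cdot g=g^{ij}R_{ij}=s[g(t)]=0$. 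Up to the normalization of the mass this yields $\frac{d}{dt}m(g(t))=-2\int_M|\mathrm{Ric}[g(t)]|^2\,d\mathrm{vol}$, whose right-hand side is manifestly nonpositive; hence the ADM mass is non-increasing and strictly decreasing unless $\mathrm{Ric}[g(t)]\equiv 0$, and a Ricci-flat asymptotically flat metric is flat and therefore Euclidean.

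The main obstacle I anticipate is reconciling regularity: Lemma \ref{thm lee parker} is stated for a \emph{smooth} family, whereas Theorem \ref{short time existence on af} supplies only $g(t)-g_e\in C^{1,2+\alpha}_{-\tau}$. I would close this gap with parabolic interior regularity for (DeTurck) conformal Ricci flow, which makes $g(t)$ smooth in space and time for $t>0$; the variation formula then holds on $(0,T)$ and extends to $t=0$ by continuity of all quantities involved. A secondary technical care is to ensure that the weighted decay is uniform enough in $t$ that the boundary integrals defining $m(g(t))$ and the bulk integrals in the variation formula converge and depend continuously on $t$, which follows from the $C^{1,2+\alpha}_{-\tau}$ control sharpened by the scalar-flat improvement on the decay of $s$.
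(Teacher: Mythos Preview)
Your proposal is correct and follows essentially the same route as the paper: verify $g(t)\in\mathcal{M}_\tau$ via the expansion $s=\partial_i\partial_j h_{ij}-\partial_j\partial_j h_{ii}+O(\rho^{-2\tau-2})$ together with scalar flatness, then feed $\phi=-2\mathrm{Ric}-2pg$ into Lemma~\ref{thm lee parker} and use $s\equiv 0$ to kill both the bulk term and the pressure contribution. The one place where you are quicker than the paper is the final rigidity step: the assertion that a Ricci-flat asymptotically flat manifold is Euclidean is immediate in dimension~$3$ but requires an argument in higher dimensions, and the paper supplies this by citing \cite[Theorem~1.5]{BKN} and \cite[Proposition~10.2]{LP}.
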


\begin{proof} To verify that conformal Ricci flow $g(t)\in\mathcal {M}_\tau$ we only need to verify that
$$
\partial_j\partial_i g_{ij} (t) - \partial_j\partial_j g_{ii}(t)\in L^1({\rm M}, d\text{vol}[g_e]).
$$
Recall \cite[(9.2)]{LP}
$$
s = \partial_j\partial_i g_{ij} - \partial_j\partial_j g_{ii} + O(\rho^{-2\tau - 2})
$$
which implies that
$$
\partial_j\partial_i g_{ij} - \partial_j\partial_j g_{ii} = O(\rho^{-2\tau -2})\in L^1({\rm M},  d\text{vol}[g_e]).
$$
for $\tau \in (\frac {m-2}2, m-2)$. It is easily seen that the ADM mass is strictly decreasing except that $g_0$ is Ricci flat. Then, using \cite[Theorem 1.5]{BKN} and \cite[Proposition 10.2]{LP}, one concludes that $g_0$ is the standard Euclidean metric. Therefore the proof is complete.
\end{proof}

A quick application of the above Theorem \ref{thm monotone of ADM mass} is a simple and direct proof of the rigidity part of Schoen and Yau positive mass theorem. Namely,

\begin{corollary} (\cite{SY}) Suppose that $\left({\rm M}^m, \ g\right)$ is asymptotically flat manifold with nonnegative scalar curvature and that $g - g_e \in C^{4, \alpha}_{-\tau}$
for $\tau > \frac {m-2}2$. Then, if the ADM mass $m(g) = 0$, then $({\rm M}^m, \ g)$ is isometric to the standard Euclidean space $\mathbb{R}^m$.
\end{corollary}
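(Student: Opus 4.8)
The plan is to exploit the mass-monotonicity of conformal Ricci flow from Theorem \ref{thm monotone of ADM mass}, together with the inequality part of the positive mass theorem (the analytic substance of \cite{SY}, which I take as given), to pin the mass at zero along the flow and thereby force the Ricci tensor to vanish. The argument proceeds in three stages: reduce to scalar-flat initial data in the admissible weight range, run the flow to conclude Ricci-flatness, and then invoke the classification of Ricci-flat asymptotically flat manifolds.

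First I would reduce to the scalar-flat case. Since $s[g]\ge 0$, by \cite[Lemma 3.3, Corollary 3.1]{SY} there is a positive conformal factor $u\to 1$ at infinity with $\bar g=u^{4/(m-2)}g$ scalar flat and still asymptotically flat, where $u$ solves $\tfrac{4(m-1)}{m-2}\Delta_g u = s[g]\,u$. Thus $\Delta_g u\ge 0$, so $u$ is subharmonic with $u\to 1$; the maximum principle gives $0<u\le 1$, whence the standard expansion $u=1-A\rho^{-(m-2)}+o(\rho^{-(m-2)})$ with $A\ge 0$. This yields $m(\bar g)\le m(g)$, with equality precisely when $u\equiv 1$, i.e. when $s[g]\equiv 0$. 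Because $\bar g$ is scalar flat, the positive mass theorem gives $m(\bar g)\ge 0$; combined with $m(g)=0$ this forces $m(\bar g)=0=m(g)$, hence $s[g]\equiv 0$ and $\bar g=g$. So $g$ is already scalar flat. If $\tau\ge m-2$ I would simply pick $\tau'\in(\tfrac{m-2}2,m-2)$ with $\tau'\le\tau$; then $g-g_e\in C^{4,\alpha}_{-\tau'}$ lands in the exact hypothesis range of Theorem \ref{thm monotone of ADM mass}, while $\tau'>\tfrac{m-2}2$ keeps the mass well defined and invariant.

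Next I would run conformal Ricci flow from $g$. By Theorem \ref{thm monotone of ADM mass} a solution $g(t)$ exists for $t\in[0,T]$, stays in $\mathcal M_{\tau'}$, and satisfies
\[
\frac{d}{dt} m(g(t)) = -2\int_M |Ric[g(t)]|^2\, d\text{vol}[g(t)] \le 0.
\]
Crucially, conformal Ricci flow preserves the constant scalar curvature $s_0$, which here is $0$, so every $g(t)$ is scalar flat and the positive mass theorem gives $m(g(t))\ge 0$ for each $t$. Since $m(g(0))=m(g)=0$ and $m(g(t))$ is nonincreasing, we get $0\le m(g(t))\le 0$, so $m(g(t))\equiv 0$; therefore $\frac{d}{dt}m(g(t))\equiv 0$, which forces $\int_M|Ric[g(t)]|^2=0$ and hence $Ric[g(t)]\equiv 0$. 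In particular $g=g(0)$ is Ricci flat, and a Ricci-flat asymptotically flat manifold with this decay is isometric to $\mathbb{R}^m$ by \cite[Theorem 1.5]{BKN} and \cite[Proposition 10.2]{LP}, exactly as in the proof of Theorem \ref{thm monotone of ADM mass}.

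The main obstacle is the scalar-flat reduction of the first stage: one must verify carefully that the conformal change to scalar-flat data does not increase the mass, that equality of masses forces $s[g]\equiv 0$, and that the resulting metric meets the decay and regularity hypotheses required to start the flow. Once the initial metric is scalar flat in the admissible weight range, the remainder is the genuinely ``quick'' consequence advertised in the introduction: the monotonicity formula combined with the positive-mass lower bound traps the mass at zero and annihilates the Ricci curvature along the flow.
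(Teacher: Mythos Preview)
Your proposal is correct and follows essentially the same approach as the paper's proof: first reduce to the scalar-flat case via the conformal change of \cite{SY} combined with the positive mass inequality, then use the mass-monotonicity of conformal Ricci flow together with the positive mass lower bound to force Ricci flatness, and finally invoke the classification of Ricci-flat asymptotically flat manifolds via \cite{BKN} and \cite{LP}. The paper phrases both reductions as contradiction arguments while you phrase them as direct squeeze arguments, and you add the careful weight adjustment to land in $(\tfrac{m-2}{2}, m-2)$, but the substance is identical.
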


\begin{proof} First we know $g$ has to be scalar flat. Otherwise one can conformally deform the metric to scalar flat and decrease the ADM mass to negative, which is impossible
due to the first part of the positive mass theorem of Schoen and Yau. Next we invoke Theorem \ref{thm monotone of ADM mass} and come to the same contradiction if $g$ is not flat.
\end{proof}

%%%%%%%%%%%%%%%%%%%%%%%%%%%%%%%%%%%%%%%%%%%%%%%%%%%%%%%%%%
%%%%%%%%%%%%%%%%%%%%%%%%%%%%%%%%%%%%%%%%%%%%%%%%%%%%%%%%%%

\bibliographystyle{natbib}

\end{document}